\numberwithin{equation}{section}
\newcommand{\qtq}[1]{\quad\text{#1}\quad}
\let\Re=\undefined\DeclareMathOperator*{\Re}{Re}
\newcommand{\R}{\mathbb{R}}
\newcommand{\C}{\mathbb{C}}
\newcommand{\eps}{\varepsilon}
\newcommand{\F}{\mathcal{F}}
\newtheorem{theorem}{Theorem}[section]
\newtheorem{lemma}[theorem]{Lemma}
\newtheorem{conjecture}[theorem]{Conjecture}
\newtheorem{proposition}[theorem]{Proposition}
\theoremstyle{definition}
\theoremstyle{remark}
\begin{document}

\title[Nonlinear wave equations]{Numerical simulations for the energy-supercritical nonlinear wave equation}

\author{Jason Murphy}
\address{Department of Mathematics and Statistics, Missouri University of Science and Technology, Rolla, MO, USA}
\email{jason.murphy@mst.edu}

\author{Yanzhi Zhang}
\address{Department of Mathematics and Statistics, Missouri University of Science and Technology, Rolla, MO, USA}
\email{zhangyanz@mst.edu}

\maketitle

\begin{abstract} We carry out numerical simulations of the defocusing energy-supercritical nonlinear wave equation for a range of spherically-symmetric initial conditions.  We demonstrate numerically that the critical Sobolev norm of solutions remains bounded in time.  This lends support to conditional scattering results that have been recently established for nonlinear wave equations.
\end{abstract}

\section{Introduction}

In recent years, there has been a great deal of progress in understanding the long-time behavior of solutions to nonlinear dispersive partial differential equations.  One line of research has focused on the scattering problem for large solutions under optimal regularity assumptions on the initial conditions, particularly in the setting of defocusing nonlinear Schr\"odinger equations (NLS) and wave equations (NLW).  Progress in this direction was precipitated especially by the development of new techniques (e.g. the concentration compactness approach to induction on energy) that were developed in order to establish global well-posedness and scattering (i.e. asymptotically linear behavior) for certain special cases, e.g. the mass- and energy-critical NLS.  Outside of these special cases, however, current techniques are often limited to proving conditional results, in which one shows that scattering occurs under the assumption of \emph{a priori} bounds for a critically-scaling Sobolev norm.  In this paper, we will present numerical simulations for the energy-supercritical NLW that lend support to the veracity of these assumed critical bounds.  A similar study was carried out in \cite{2659737} in the setting of the energy-supercritical NLS.  A related study also appeared in \cite{StraussVazquez} in the setting of the nonlinear Klein--Gordon equation.  See also \cite{DonSch} for a numerical study of the boundary of the forward scattering region for the \emph{focusing} nonlinear Klein--Gordon equation. 

To describe the problem and our results more precisely, we introduce the equations
\begin{equation}\label{nls}\tag{NLS}
i\partial_t u + \Delta u = \mu|u|^p u,\quad u:\R_t\times\R_x^d\to\C
\end{equation}
and
\begin{equation}\label{nlw}\tag{NLW}
-\partial_t^2 u + \Delta u = \mu|u|^p u,\quad u:\R_t\times\R_x^d\to\R.
\end{equation}
In each case, the parameter $\mu$ yields either the defocusing {($\mu > 0$)} or focusing  {($\mu < 0$)} case, and $p>0$ is the power of the nonlinearity.  These are Hamiltonian PDE, with the conserved energy given by
\[
E(u) = \int_{\R^d} \tfrac12 |\nabla u|^2 + \tfrac{\mu}{p+2}|u|^{p+2}\,dx\qtq{for NLS}
\]
and 
\begin{equation}\label{energy}
E(u,\partial_t u) = \int_{\R^d} \tfrac12 |\partial_t u|^2 + \tfrac12 |\nabla u|^2 + \tfrac{\mu}{p+2}|u|^{p+2}\,dx\qtq{for NLW.}
\end{equation}

Both equations also enjoy a scaling symmetry, namely
\begin{equation}\label{scaling}
u(t,x)\mapsto \begin{cases} \lambda^{\frac{2}{p}}u(\lambda^2 t,\lambda x)  & \text{for NLS} \\ \lambda^{\frac{2}{p}} u(\lambda t,\lambda x) & \text{for NLW,}\end{cases} 
\end{equation}
which defines a notion of \emph{critical regularity} for these equations.  In particular, if we define
\begin{equation}\label{sc}
s_c = \tfrac{d}{2}-\tfrac{2}{p},
\end{equation}
then one finds that the  $\dot H^{s_c}$-norm of $u|_{t=0}$ for NLS and the $\dot H^{s_c}\times \dot H^{s_c-1}$ norm of $(u,\partial_t u)|_{t=0}$  for NLW are invariant under the rescaling \eqref{scaling}.\footnote{Here $\dot H^s$ denotes the homogeneous $L^2$-based Sobolev space; see Section~\ref{S:Notation}.} Generally speaking, these are the optimal spaces for initial data in terms of the well-posedness theory of \eqref{nls} and \eqref{nlw}; see e.g. \cite{Cazenave, Lindblad-Sogge, CCT}.  

The main topic of this paper is the question of {scattering}.  We say that a forward-global solution $u$ to \eqref{nlw} \emph{scatters} (in $\dot H^{s_c}\times\dot H^{s_c-1}$) if there exists a solution $v(t)$ to the \emph{linear} wave equation such that
\[
\lim_{t\to\infty} \|(u(t),\partial_t u(t))-(v(t),\partial_t v(t))\|_{\dot H^{s_c}\times \dot H^{s_c-1}} = 0. 
\]
An analogous definition holds for solutions to \eqref{nls}. 

A special case of \eqref{nls} and \eqref{nlw}, called the \emph{energy-critical} case, occurs when the scaling symmetry \eqref{scaling} leaves the energy of the solution invariant as well.  This corresponds to choosing $p=\tfrac{4}{d-2}$ in dimensions $d\geq 3$, or equivalently $s_c=1$.  For the case of NLS, there is also the \emph{mass-critical} case corresponding to $p=\tfrac{4}{d}$, in which case the scaling symmetry preserves the mass (i.e. the $L^2$ norm), which is a conserved quantity for NLS (but not for NLW). 
For these special cases, conservation of energy/mass yields \emph{a priori} control over the critical Sobolev norm (in the defocusing case, at least).  Ultimately, this provides enough control over solutions to establish global well-posedness and scattering, although proving this is a very challenging problem that required the work of many mathematicians over many years to settle definitively (see \cite{Bou1, CKSTT, Dod1, Dod2, Dod3, Dod4, Dod5, Gri, KenMer, KTV, KV2, KV3, KVZ, RV, Tao1, TVZ, Vis0, Vis1, Vis2, BahGer, Grillakis, Grillakis2, Kapitanski, KenMer0, LiZha, Nakanishi, ShaStr, Struwe}):

\begin{theorem}[Global well-posedness and scattering]\label{T:scatter} For the defocusing case of the mass- and energy-critical NLS or energy-critical NLW, arbitrary initial data in the critical Sobolev space lead to global solutions that {scatter}. Similar results hold in the focusing case, provided one imposes suitable size restrictions on the mass/energy.
\end{theorem}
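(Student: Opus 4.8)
The plan is to follow the concentration-compactness/rigidity roadmap pioneered by Kenig and Merle, combined with the a priori control that conservation of energy (respectively mass) provides in the critical cases; I will sketch the argument uniformly, indicating where the defocusing and focusing hypotheses enter. First I would establish the local theory and small-data scattering. Using the dispersive and Strichartz estimates for the linear Schr\"odinger and wave propagators (including the endpoint estimates), one obtains local well-posedness in the critical space $\dot H^{s_c}$ for \eqref{nls} and $\dot H^{s_c}\times\dot H^{s_c-1}$ for \eqref{nlw}, together with a small-data statement: solutions whose critical norm is sufficiently small exist for all time and scatter. Crucially, the same fixed-point/Strichartz machinery yields a stability (long-time perturbation) theory, allowing approximate solutions with small error to be upgraded to genuine solutions with controlled spacetime norms. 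These are routine but indispensable inputs.

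Second, and this is the structural heart of the critical cases, I would invoke the relevant conservation law to bound the critical norm. For the mass-critical NLS the conserved mass is the $L^2$ norm, and for the energy-critical problems the conserved energy \eqref{energy} controls the $\dot H^1$ norm; in each case this is exactly the critical regularity $s_c=0$ or $s_c=1$. In the defocusing setting ($\mu>0$) the potential term carries a favorable sign, so conservation directly bounds the critical norm for all time. In the focusing setting ($\mu<0$) this sign is lost, and one instead uses the variational characterization of the ground-state soliton together with the imposed size restriction (energy/mass below the ground-state threshold) to recover coercivity and the a priori bound. This step is precisely what fails for the energy-supercritical equation, where no conservation law controls $\dot H^{s_c}$, which motivates the numerical investigation of the present paper.

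Third, I would argue by contradiction. Assuming scattering fails, one defines a critical threshold for the conserved quantity below which scattering holds and at which it first fails. A linear profile decomposition adapted to the propagator and to the symmetry group of the equation (scaling and translation via \eqref{scaling}, together with Galilean boosts in the mass-critical NLS case), combined with the stability theory, then produces a minimal nonscattering solution, a \emph{critical element}, whose orbit is precompact in the critical space modulo these symmetries.

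Finally comes the rigidity theorem, which I expect to be the main obstacle: one must show that such a compact, nonscattering critical element cannot exist. Here I would exploit monotonicity formulas, namely localized virial/Morawetz and interaction-Morawetz estimates, which, when evaluated against a solution with precompact orbit, force it to vanish identically, a contradiction. The analysis is delicate: in the mass-critical NLS case one must separately preclude the several dynamical scenarios (soliton-like, frequency cascade, and self-similar) left open by the compactness, and in the focusing cases one must combine the monotonicity with the variational and concavity structure near the ground state to close the argument. Assembling these ingredients across the various equations and dimensions is exactly the multi-author effort referenced above.
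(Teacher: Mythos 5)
Your outline coincides with the strategy this paper itself attributes to the literature: Theorem~\ref{T:scatter} is not proved here but is cited to Bourgain's induction on energy and its refinement into the Kenig--Merle roadmap --- local/small-data and stability theory, a priori control of the critical norm by the conserved mass/energy (with the ground-state threshold supplying coercivity in the focusing case), extraction of a minimal compact counterexample via profile decomposition, and rigidity via virial/Morawetz estimates --- which is precisely the architecture you describe. Your sketch is correct and takes essentially the same approach, at the only level of detail a statement of this scope can reasonably carry outside the cited multi-paper literature.
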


The resolution of Theorem~\ref{T:scatter} required the development of a powerful new set of techniques.  The initial breakthrough was due to Bourgain, who introduced the method of `induction on energy' \cite{Bou1}.  This technique has been significantly developed and refined.  Presently, the typical approach to problems as in Theorem~\ref{T:scatter} follows the so-called `Kenig--Merle roadmap' developed in \cite{KenMer}.  One proceeds by contradiction:  Assuming the theorem to be false, one constructs a minimal energy counterexample, which (due to minimality) enjoys certain compactness properties.  One then shows that such compactness properties are at odds with the dispersive/conservative nature of the equation and ultimately lead to a contradiction; this is often achieved through the use of conservation laws together with certain nonlinear estimates known as virial or Morawetz estimates.  For an expository introduction to these techniques, we refer the reader to \cite{KV-Clay, V-Oberwolfach}. 

Beginning with the work of Kenig and Merle \cite{KenMer2}, a great deal of recent research has focused on establishing analogous results beyond the mass- and energy-critical cases.  In such cases, the `Kenig--Merle roadmap' naturally leads to a proof of scattering under the assumption of \emph{a priori} bounds in the critical Sobolev space, where the assumed bounds play the role of the `missing conservation law' at critical regularity.  Stated roughly, we have the following conjecture:
\begin{conjecture}
For the defocusing NLS or NLW, any solution that remains bounded in the critical Sobolev space is global-in-time and scatters.
\end{conjecture}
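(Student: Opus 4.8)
The plan is to establish the conjecture conditionally via the concentration-compactness and rigidity framework (the `Kenig--Merle roadmap'); for concreteness I describe the argument for \eqref{nlw}, the NLS case being entirely analogous. The first step is to develop the local theory in the critical space $\dot H^{s_c}\times\dot H^{s_c-1}$ using Strichartz estimates for the linear wave propagator. From this one extracts a small-data global scattering result and, crucially, a \emph{stability/perturbation lemma}: if a solution obeys the assumed a priori bound $\sup_t \|(u,\partial_t u)\|_{\dot H^{s_c}\times\dot H^{s_c-1}}\le A$ and in addition has finite scattering norm---a scaling-critical spacetime norm $\|u\|_{L_t^q L_x^r}$---then it extends globally and scatters. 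This reduces the conjecture to proving that the a priori bound alone forces finiteness of the scattering norm.

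Second, I would argue by contradiction. Supposing there are solutions with $\sup_t\|(u,\partial_t u)\|_{\dot H^{s_c}\times\dot H^{s_c-1}}\le A$ but infinite scattering norm, one passes to the infimal such threshold and runs the linear profile decomposition of Bahouri--Gérard together with the perturbation lemma. The standard argument then produces a \emph{minimal (critical) element}: a nonzero solution whose orbit $\{(u(t),\partial_t u(t))\}$ is precompact in $\dot H^{s_c}\times\dot H^{s_c-1}$ modulo the scaling symmetry \eqref{scaling} and spatial translations. Precompactness yields parameters $N(t)>0$ and $x(t)\in\R^d$ describing the frequency scale and spatial center at which the solution concentrates for all time.

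The final and hardest step is the rigidity argument that excludes this critical element, and it is here that the energy-supercritical nature of the problem is most severe. Because $s_c>1$, the conserved energy \eqref{energy} controls only the \emph{subcritical} $\dot H^1$ norm, so there is no conservation law at the scaling-critical level with which to close a virial or Morawetz estimate directly---this is exactly the `missing conservation law' that the a priori hypothesis is meant to replace. I would first upgrade the compactness of the orbit into additional decay and regularity (in particular finiteness of the energy, hence its conservation, together with suitable spatial localization), and then deploy a truncated Morawetz/virial identity adapted to the length scale $1/N(t)$. Controlling the error terms generated by the truncation requires the compactness of the orbit and a careful case analysis according to the behavior of $N(t)$ (bounded frequency versus a frequency cascade or self-similar concentration). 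Showing that the monotonicity produced by the virial computation, combined with the defocusing sign $\mu>0$ of the nonlinearity, is incompatible with every surviving scenario delivers the desired contradiction and completes the proof.
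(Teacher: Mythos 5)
The statement you are asked to prove is labeled a \emph{Conjecture} in the paper, and the paper offers no proof of it: its role there is to motivate the numerical study, and the authors explicitly describe the known results of this type as an extensive but incomplete list of special cases. Your proposal is a faithful outline of the Kenig--Merle roadmap that underlies those special cases, but an outline is all it is, and the place where you wave your hands --- the rigidity step --- is precisely the place where the argument is not known to close in general. Two concrete gaps. First, in the preclusion of the minimal element you propose to ``first upgrade the compactness of the orbit into additional decay and regularity (in particular finiteness of the energy).'' In the energy-supercritical regime $s_c>1$ this means proving \emph{negative} regularity relative to the critical space, i.e.\ that the almost periodic solution lies in $\dot H^1\times L^2$ even though the a priori bound only lives at $\dot H^{s_c}\times\dot H^{s_c-1}$. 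The known proofs of this (double Duhamel in weighted/Lorentz spaces, as in Killip--Visan, Bulut, Kenig--Merle) depend delicately on the specific pair $(d,p)$, on the behavior of the frequency scale $N(t)$, and frequently on radial symmetry; no argument of this kind is available for general defocusing NLS/NLW, and without it the conserved energy is not even finite for the critical element, so the Morawetz/virial machinery you invoke has nothing to act on. Second, even the front end of your scheme --- the critical local theory, stability lemma, and profile decomposition at non-conserved regularity $s_c$ --- requires fractional chain/product rule estimates for the nonlinearity $|u|^pu$, which is only $C^{\lfloor p\rfloor+1}$ when $p$ is not an even integer; this forces restrictions on $(d,p)$ and is one reason the literature treats isolated cases such as $(d,p)=(3,6)$ and $(5,2)$ rather than the full supercritical range. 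So your proposal correctly identifies the strategy by which every known instance of the conjecture has been proved, but it does not constitute a proof of the conjecture, and it should not be presented as one; the honest conclusion is that the statement remains open, which is exactly how the paper treats it.
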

 By now, the range of positive results of this type is extensive. For the case of NLS, see \cite{KenMer2, KV4, KMMV, KMMV2, Mur1, Mur2, Mur3, XieFan, DMMZ, MMZ, LuZheng, Gao, Tengfei}; for the case of NLW, see \cite{BADM, Bulut1, Bulut2, Bulut3, DuyKenMer, DuyRoy, DuyYang, DodLaw, KenMer3, KV5, KV6, Rodriguez, Shen1, Shen2}.  While some recent remarkable work of Dodson \cite{Dodson-new1, Dodson-new2} has actually established \emph{unconditional} scattering results at critical regularity for the energy-subcritical NLW with radial initial data, the majority of the scattering results for large data at `non-conserved' critical regularity are conditional in nature.  We would also like to mention the recent work of D'Ancona \cite{D'Ancona}, who has established some well-posedness results for the defocusing energy-supercritical NLW in the exterior of a ball.  

In this paper, we carry out numerical simulations for the energy-supercritical NLW with radial (i.e. spherically symmetric) initial conditions\footnote{Note that radiality is preserved in time. This is a consequence of the fact that the Laplacian commutes with rotations, together with the uniqueness of solutions.}, where \emph{energy-supercritical} refers to the condition $p>\tfrac{4}{d-2}$, or equivalently $s_c>1$.  In the radial setting, \eqref{nlw} takes the form
\begin{equation}\label{NLW-ode}
-\partial_t^2 u + \partial_r^2 u + \tfrac{d-1}{r}\partial_r u = \mu |u|^p u,\quad u:\R_t\times(0,\infty)\to\R,
\end{equation}
where we write $u=u(t,r)$, with $r >0$ and impose the Neumann boundary condition $\partial_r u|_{r=0}\equiv 0$.  As in \cite{2659737, StraussVazquez}, the restriction to radial solutions provides a significant simplification in the numerical analysis of \eqref{nlw}.  Our main result is to demonstrate (numerically) boundedness of the critical Sobolev norms for large time, thus lending support to the conditional scattering results discussed above.  We expect that similar results will hold in the non-radial setting and plan to address this case in future work. 

For the sake of concreteness, we focus on two representative cases, namely,
\[
(d,p, s_c) = \big(3,6, \tfrac76\big)\qtq{and}(d,p, s_c) = \big(5,2, \tfrac32\big).
\]
These particular cases were considered in the works \cite{KenMer3, KV5, KV6, Bulut2, Bulut3}, which established scattering under the assumption of \emph{a priori} bounds for $(u,\partial_t u)$ in $\dot H^{s_c}\times \dot H^{s_c-1}$. We study a range of choices for $u_0 = u(0, r)$ and $u_1 = \partial_t u(0, r)$ (see Section~\ref{S:Results}), and in all cases we observe (numerically) that the critical Sobolev norm converges after a short time and, in particular, remains bounded for large times.  Additionally, we compute numerically the potential energy (i.e. the $L^{p+2}$ norm), the $L^\infty$ norm, and certain scale-invariant Besov norms.  We observe that the Besov norms become relatively small (compared to the Sobolev norms), and that the higher Lebesgue norms decay at a rate that matches solutions to the linear wave equation.  All of this behavior is consistent with scattering (see e.g. Section~\ref{S:Scattering}).  We describe our results in detail in Section~\ref{S:Results}.

We would also like to compare our results with the work of Strauss and Vazquez \cite{StraussVazquez} on the closely-related defocusing nonlinear Klein--Gordon equation (NLKG)
\begin{equation}\label{NLKG}
-\partial_t^2 u + \Delta u - u = |u|^{p} u,
\end{equation}
which they studied in dimension $d=3$ with radial initial conditions. They considered several choices of nonlinearity, including $p\in\{2,4,6,8\}$ as well as the nonlinearity $\sinh(5u)-5u$.  They also observed numerically that solutions should decay in $L^\infty$ at a rate matching solutions to the underlying linear problem.  Our results may be viewed in part as an extension of those in \cite{StraussVazquez} (albeit in the setting of NLW, rather than NLKG).  Indeed, we also observe the sharp $L^\infty$ decay in the NLW setting; additionally, we have simulated the solutions over a longer time interval and have computed several other quantities that are related to the problem of scattering (e.g. the Sobolev and Besov norms).  We have also treated the case $d=5$, in addition to the three-dimensional case.

At present, existing analytic techniques are generally insufficient to rigorously establish the boundedness in time of the critical Sobolev norm of solutions, unless such norms can be controlled by conserved quantities (although we should mention again the remarkable work of \cite{Dodson-new1, Dodson-new2} for the case of the radial energy-subcritical NLW).  In particular, this problem seems to be especially challenging in the energy-supercritical regime, as there is no known coercive conserved quantity above the regularity of the energy.  In general, such boundedness has generally been expected to hold true in the defocusing setting, as both the dispersion of the underlying linear equation and the defocusing nature of the nonlinearity tend to cause solutions to spread out and decay.  Our results provide additional numerical evidence in support of the belief of boundedness and lend support to the wide range of conditional scattering results for NLW that have been established in recent years.  

It remains an important open problem in the analysis of nonlinear dispersive PDE to determine whether energy-supercritical Sobolev norms do indeed remain bounded in time in general.  In the \emph{negative} direction, we would like to mention the recent preprint of Merle, Rapha\"el, Rodnianski, and Szeftel \cite{MRRS}, who have constructed radial solutions to the defocusing energy-supercritical NLS with energy-supercritical Sobolev norms blowing up in finite time at a polynomial rate!  Their result relies on the hydrodynamical formulation of NLS, making use of suitable underlying dynamics for the  compressible Euler equations in order to produce a highly oscillatory blowup profile. 

The rest of this paper is organized as follows: In Section~\ref{S:Notation}, we collect some basic notation and preliminaries.  In Section~\ref{S:Methods}, we describe the numerical methods we use in this work.  In Section~\ref{S:Results}, we describe  the sets of initial conditions used in the numerical simulations and discuss our numerical findings.  In Section~\ref{S:Scattering}, we prove a simple scattering result (namely, scattering holds if the critical Besov norm is sufficiently small compared to the critical Sobolev norm), which is relevant to the discussion in Section~\ref{S:Results}.  Finally, in Appendix~\ref{Incoming} we discuss the notion of incoming/outgoing waves, which play a role in our choice of initial conditions.

\subsection{Notation and preliminaries}\label{S:Notation}

We use the standard notation for Lebesgue norms, e.g.
\[
\|u\|_{L_x^\rho(\R^d)} = \biggl(\int_{\R^d} |u(x)|^\rho\,dx\biggr)^{\frac{1}{\rho}}
\]
for $1\leq \rho<\infty$.  We denote space-time norms by $L_t^q L_x^\rho$, i.e.
\[
\|u\|_{L_t^q L_x^\rho(I\times\R^d)} = \bigl\|\,\|u(t)\|_{L_x^\rho(\R^d)}\,\|_{L_t^q(I)}. 
\]

We define Sobolev and Besov norms by utilizing the Fourier transform, denoted by $\F f$ or $\hat f$.  We define
\[
\hat f(\xi)=(2\pi)^{-\frac{d}{2}}\int_{\R^d} e^{-ix\xi}f(x)\,dx,\qtq{so that} f(x) = (2\pi)^{-\frac{d}{2}}\int_{\R^d} e^{ix\xi}\hat f(\xi)\,d\xi. 
\]
The homogeneous $L^2$-based Sobolev spaces are then defined by
\[
\| u\|_{\dot H_x^s(\R^d)} = \| |\nabla|^s u\|_{L_x^2(\R^d)} = \| |\xi|^s \hat u\|_{L_\xi^2(\R^d)}. 
\]
Besov spaces are defined using the standard Littlewood--Paley multipliers.  In particular, for $N\in 2^{\mathbb{Z}}$ we let $\varphi_N$ denote a smooth bump function supported where $|\xi|\sim N$, with $\sum\varphi_N \equiv 1$.  We then define the Littlewood--Paley projections $P_N u$ through the Fourier transform, i.e. 
\[
P_N u = \F^{-1} \varphi_N \hat u.
\]
The Besov norm $\dot B^{s}_{q,\alpha}$ is defined via
\[
\|u\|_{\dot B^s_{q,\alpha}(\R^d)} = \bigl\| \|N^{s} P_N u\|_{L^q(\R^d)} \bigr\|_{\ell_N^\alpha(2^{\mathbb{Z}})}. 
\]

We will frequently consider the Sobolev norm
\[
\|(u,\partial_t u)\|_{\dot H_x^{s_c}\times \dot H_{x}^{s_c-1}}^2 := \| u\|_{\dot H_x^{s_c}}^2+\|\partial_t u \|_{\dot H_x^{s_c-1}}^2,\qtq{where} s_c>1,
\]
as well as the Besov norms
\[
\|u\|_{\dot B^{s_c}_{2,\infty}} = \sup_{N\in 2^{\mathbb{Z}}} N^{s_c}\|P_N u\|_{L_x^2} \qtq{and} \|\partial_t u \|_{\dot B_{2,\infty}^{s_c-1}} = \sup_{N\in 2^{\mathbb{Z}}} N^{s_c-1}\|P_N \partial_t u \|_{L_x^2}.
\]
Note that for any $s$, we have 
\[
N^{s}\| \varphi_N \hat u\|_{L_\xi^2} \lesssim \|u\|_{\dot H^{s}}
\]
uniformly in $N$, which implies 
\[
\|u\|_{\dot B_{2,\infty}^{s_c}} \lesssim \|u\|_{\dot H^{s_c}} \qtq{and}\|\partial_t u\|_{\dot B_{2,\infty}^{s_c-1}} \lesssim \|\partial_t u\|_{\dot H^{s_c-1}}.
\]

As mentioned above, the restriction to radial (i.e. spherically symmetric) solutions leads to some simplifications.  We have already mentioned the simplification of the PDE (and hence the numerical analysis).  Additionally, we may change to spherical coordinates and write
\begin{align*}
\hat u(\xi) & = (2\pi)^{-\frac{d}{2}}\int_0^\infty \biggl[\int_{\partial B(0,1)} e^{-i|\xi|r\frac{\xi}{|\xi|}\cdot \omega}\,dS(\omega)\biggr]u(r) r^{d-1}\,dr \\
& = |\xi|^{-\frac{d-2}{2}}\int_0^\infty J_{\frac{d-2}{2}}(r|\xi|)u(r) r^{\frac{d}{2}}\,dr,
\end{align*}
where $J_\nu$ denotes the standard Bessel function (see e.g. \cite{SteinWeiss}).  This informs our numerical computation of the Fourier transform, and therefore the relevant Sobolev and Besov norms.

%% ==========================================================================
\section{Numerical methods}\label{S:Methods}

We first truncate (\ref{NLW-ode}) to a computational domain $[0, R_{\rm max}]$ with $R_{\rm max}$ sufficiently large that the truncating effect can be neglected. 
We then reformulate (\ref{NLW-ode}) and solve the following problem: 
\begin{eqnarray}\label{eq2}
\partial_{tt} u = \frac{1}{r^{d-1}}\frac{\partial}{\partial r}\Big(r^{d-1}\frac{\partial u}{\partial r}\Big) - \mu|u|^p u, && \mbox{for} \ r \in (0, R_{\max}), \ \ \ t > 0,\qquad \\
\label{BC}
\partial_r u(t, r)|_{r = 0} = 0, \quad\ \  u(t,R_{\max}) = 0, && \mbox{for} \ t\ge 0, \\
\label{IC}
u(0, r) = u_0(r), \quad \ \ \partial_t u(t, r)\mid_{t = 0} = u_1(r) && \mbox{for} \ r \in [0, R_{\max}]. 
\end{eqnarray}
Here the homogeneous  Dirichlet boundary conditions are considered at $r = R_{\max}$. 

%%%% %%%%
In \cite{StraussVazquez},  an implicit  finite difference method is introduced to solve the NLKG (\ref{NLKG}).  This method exactly conserves the discrete energy, but at each time step one has to solve a nonlinear system with iteration methods (e.g. Newton's method).  Hence, its computational costs are high.  To avoid solving nonlinear systems at each time step, explicit methods are popular in practice.  In \cite{DonSch}, an explicit method with a second-order difference scheme for both temporal and spatial discretization is used to study the boundary of the forward scattering region for NLKG.  In \cite{2659737}, the authors study the question of boundedness of critical Sobolev norms for NLS using a finite difference scheme in space and the explicit fourth-order Runge--Kutta method in time.  Compared to implicit methods, these explicit methods are easier to implement and have less computational costs.  They do not have the exact energy conservation, but could provide a good approximation to it if proper numerical parameters are used in simulations \cite{DonSch, 2659737}. 

Next, we introduce a second-order finite difference method to solve (\ref{eq2})--(\ref{IC}).  Our method is different from that in \cite{DonSch} where the change of variable $v = ru$ is introduced to move the singularity term from linear to nonlinear part.  Instead, we will directly discretize the equation (\ref{eq2}).  Denote the mesh size $\Delta r = R_{\max}/N$ with $N$ a positive integer, and let $\Delta t > 0$ be the time step.  Then we define the spatial grid points and time sequence as 
\begin{equation}
r_j = j\Delta r, \quad \mbox{for} \ \  j = 0, 1, \ldots, N; \qquad t_n = n\Delta t, \quad \mbox{for} \ \ n = 0, 1, \ldots.\nonumber
\end{equation}
We denote the numerical approximation of $u(t_n,r_j)$  by $U_j^n$.  Then the equation (\ref{eq2}) can be approximated by the difference scheme: 
\begin{equation} \label{eq-dis}
\begin{aligned}
&\frac{U_j^{n+1} - 2U_j^n+ U_j^{n-1}}{\Delta t^2} \\
&\quad\quad=\frac{(\eta_j^+)^{d-1}\big(U_{j+1}^n - U_j^n\big) + (\eta_j^-)^{d-1}\big(U_{j-1}^n - U_j^n\big)}{\Delta r^2} - \mu |U_j^n|^pU_j^n,
\end{aligned}
\end{equation}
for $1 \le j \le N-1$ and $n \ge 1$, where we denote $\eta_j^\pm = (r_j \pm {h}/{2}) /r_j$. 
For $j = 0$, the approximation is given by: 
\begin{eqnarray}
\frac{U_0^{n+1} - 2U_0^n + U_0^{n-1}}{\Delta t^2} = \frac{d \big(U_1^n-2U_0^n + U_{-1}^n\big)}{\Delta r^2}- \mu |U_0^n|^pU_0^n, \qquad n \ge 1,
\end{eqnarray}
where $U_{-1}$ denotes the solution at the ghost point $r = r_{-1}$. 

The discretization of the boundary conditions (\ref{BC}) gives
\begin{eqnarray}
\frac{U_1^n - U_{-1}^n}{2\Delta r} = 0, \quad\ \ U_N^n = 0, \qquad n \ge 0,
\end{eqnarray}
which implies that $U_{-1}^n \equiv U_1^n$ for $n \ge 0$.  At $t = 0$, the initial condition (\ref{IC}) is discretized as 
\begin{equation}\label{DIC}
U_j^0 = u_0(r_j), \quad \ \ \frac{U_j^1 - U_j^0}{\Delta t} = u_1(r_j), \qquad \mbox{for} \ \ 0 \le j \le N-1.
\end{equation}

Combining (\ref{eq-dis})--(\ref{DIC}) yields an explicit second-order finite difference scheme to the problem (\ref{eq2})--(\ref{BC}).  
In the simulations, one sufficient stability condition  of this scheme is
\begin{equation}
\Delta t \le  \Delta r\,\sqrt{\frac{2^{d-1}}{1+3^{d-1}}}.
\end{equation}
This suggests that the simulations of (\ref{NLW-ode}) for higher dimensions are generally more time-consuming, as a smaller time step is required to ensure the numerical stability. 

The discrete energy is calculated by
\begin{equation}
E^n = \frac{\Delta r}{2}\sum_{j=1}^N \bigg[\Big(\frac{U_j^{n+1}-U_j^{n-1}}{2\Delta t}\Big)^2 
+ \Big(\frac{U_{j+1}^n - U_{j-1}^n}{2\Delta r}\Big)^2 + \frac{2\mu}{p+2}|U_j^n|^{p+2}\bigg]r_j^{d-1}
\end{equation}
for $n \ge 1$, while the Besov norms are calculated following the discretization method in \cite{2659737}.

% ------------------------------------------------------------------------------------------------------------------------------------
\subsection{Verification of numerical method}
\label{section2-1}

%%%% %%% 
We will test the performance of our  method for different numerical parameters (i.e. $R_{\rm max}$, $\Delta r$, and $\Delta t$).

%%%%  %%%%
First, we study the truncating effect of $R_{\max}$ by fixing the time step $\Delta t$ and mesh size $\Delta r$.   
In general, a large computational domain $[0,  R_{\max}]$ is required to avoid the artifacts from truncation. 
However, the larger the computational domain, the larger the number of unknowns, and thus the larger the computational costs. 
%%%% %%%%
\begin{table}[htb!]
\begin{center}
\begin{tabular}{|c|c|ccc|}
\hline 
Time & $R_{\max}$ & $u(t, r=0)$  & $u(t, r=19)$ & $\displaystyle \max_{r\in [0, R_{\max}]}|u(t, r)|$\\ 
\hline
\multirow{3}{*}{$t = 5$} & $R_{\max} = 20$ & 0.00417100 & 0.00000000 &0.30992600\\ 
&$R_{\max} = 30$ &0.00417100 & 0.00000000 &0.30992600\\ 
& $R_{\max} = 50$ & 0.00417100 & 0.00000000 &0.30992600\\ 
\hline 
\multirow{3}{*}{$t = 10$} & $R_{\max} = 20$ & 0.00008800 & 0.00000000 & 0.15864800\\ 
&$R_{\max} = 30$ & 0.00008800 & 0.00000000 & 0.15864800\\
& $R_{\max} = 50$ & 0.00008800 & 0.00000000 & 0.15864800\\
\hline 
\multirow{3}{*}{$t = 15$} & $R_{\max} = 20$ & 0.00000900 & 0.00000000 & 0.10659100\\
&$R_{\max} = 30$ & 0.00000900 & 0.00000000 & 0.10659100\\
& $R_{\max} = 50$ & 0.00000900 & 0.00000000 & 0.10659100\\ 
\hline
\end{tabular}
\caption{Comparison of numerical solutions for various $R_{\max}$ where $d = 3$, $p = 6$, and $u_0 = 4\exp(-r^2)$ and $u_1 = 0$ in (\ref{IC}).}\label{Table1}
\end{center}
\end{table}
In Table \ref{Table1}, we compare the numerical solutions $u$ that are computed with different choices of  $R_{\max}$.  For the different $R_{\max}$ chosen here, our method yields the same solutions of $u(t, r=0)$ and $\max_{r\in[0, R_{\max}]}|u(t, r)|$. 
Moreover,  the solution $u(t, r=19)$ remains zero even through time $t = 15$, suggesting that $R_{\max} = 20$ is large enough for the computation time $t\in[0, 15]$.

%%%% %%%%
Next, we will fix $R_{\rm max} = 20$  and study the numerical errors for different $\Delta t$ and $\Delta r$. 
Since the exact solution of (\ref{NLW-ode}) is unknown, we will use the numerical solution with a very fine mesh (here $\Delta r = 2^{-12} $ and $\Delta t = 2^{-14}$) as a proxy for the ``exact" solution when computing the numerical errors. 
%%%%%%%%%%%
\begin{figure}[!htbp]
\centering
\begin{subfigure}{.5\textwidth}
\centering
a)\includegraphics[width=1\linewidth]{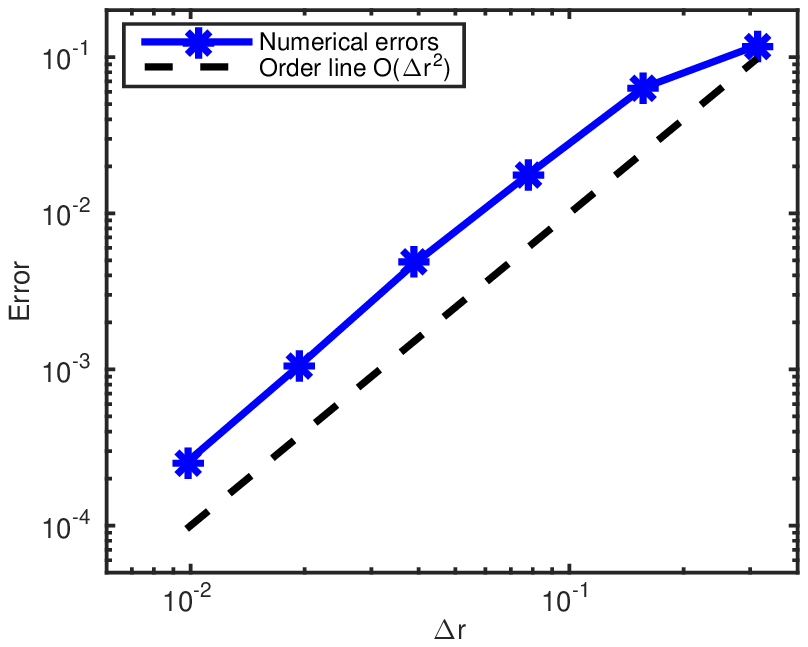}
\end{subfigure}%
\begin{subfigure}{.5\textwidth}
\centering
b)\includegraphics[width=1\linewidth]{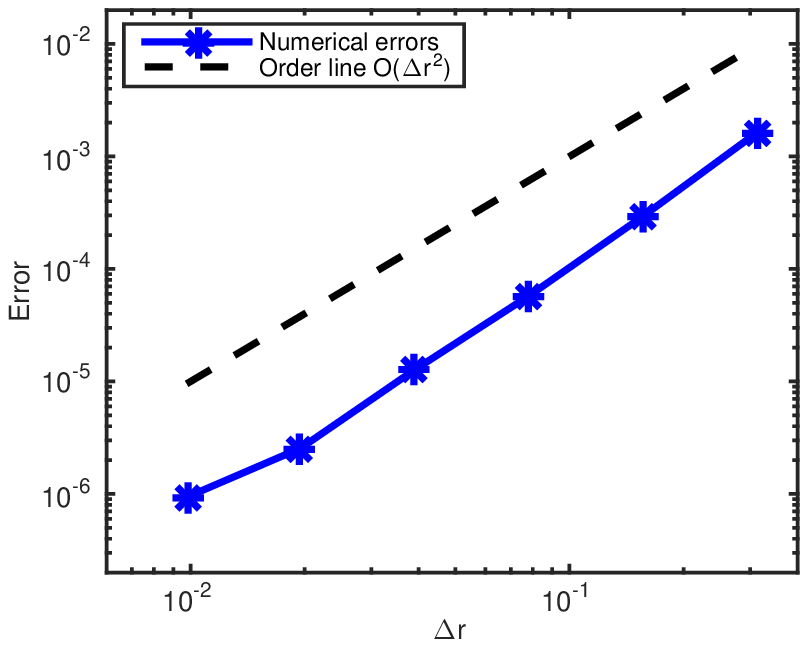}
\end{subfigure}
\caption{Numerical errors for different $\Delta t$ and $\Delta r$, where we choose $\Delta t = \Delta r/4$. The initial condition is $u_0 = 4\exp(-r^2)$ and $u_1 = 0$ in (\ref{IC}). a) $d = 3$ and $p = 6$; b) $d = 5$ and $p = 2$.}\label{EFig1}
\end{figure}
Figure \ref{EFig1} shows the $\ell^2$-norm errors in solution $u(2, r)$, where an order line of $O(\Delta r^2)$ is included for easy comparison. 
We choose the time step and mesh size to satisfy $\Delta t = \Delta r/4$. 
The results in Figure \ref{EFig1} confirm the second-order accuracy of our methods. 
It additionally shows that for fixed numerical parameters, the numerical errors for $d = 5$ are smaller than those of $d = 3$. For example, the numerical error of $\Delta r = 0.0098$ is $2.54\times10^{-4}$ for $d = 3$ and $9.38\times 10^{-7}$ for $d = 5$.

%% ==========================================================================
%%
\section{Numerical results}\label{S:Results}

We now discuss our numerical results.  To begin, let us discuss the specific choices of initial conditions used in this paper.  The key assumption on the data is that of spherical symmetry, which reduces the equation to a one-dimensional problem and thereby greatly simplifies the numerical analysis.  
We begin by considering the cases of Gaussians that are large enough to be safely outside of the small data regime (for otherwise scattering is a known consequence of the small-data well-posedness theory).  As in \cite{2659737}, we would also like to consider some initial data for which the underlying linear equation would experience some initial focusing toward the origin; for such data, we would then like to observe that the defocusing nonlinearity counters this effect.  The authors of \cite{2659737} achieved this by multiplying the Gaussian initial data by $e^{\alpha ir^2}$ for some $\alpha>0$.  In the setting of the radial wave equation, it seems natural to consider `incoming' initial data for this purpose (see e.g. \cite{Marius}), which in our setting refers to the condition
\begin{equation}\label{incoming-condition}
u_1 = \partial_r u_0 + \tfrac{d-2}{r}u_0,
\end{equation} 
where $u_0=u|_{t=0}$ and $u_1=\partial_t u|_{t=0}.$ We discuss the origin of this condition in more detail in Appendix~\ref{Incoming}.

Apart from choosing incoming/outgoing initial data (which requires defining $u_1$ precisely in terms of $u_0$), we found that varying the choice of initial velocity $u_1$ plays almost no role in terms of the long-time behavior of the solution.  Thus, other than the cases for which we take `incoming' data, we will be content to work with the simplest choice $u_1=0$.  Similar to \cite{2659737}, we will then choose Gaussian data ($u_0=C\exp(-r^2)$), `ring' data ($u_0=Cr^2\exp(-r^2)$), or an oscillating Gaussian ($u_0= C\exp(-r^2)\sin(ar)$). 

As far as the incoming condition, we note that \eqref{incoming-condition} includes the singular term $1/r$ in the expression for $u_1$.  While one can verify that $u_1$ still belongs to $L^2\cap\dot H^{s_c-1}$ (see Lemma~\ref{Lemma}), we found that unless $u_0$ vanishes to high enough order at $r=0$, it is difficult to simulate this condition numerically.  Thus we were led only to consider the incoming condition for the `ring' initial data $u_0=Cr^2\exp(-r^2)$ and for the oscillatory data $ u_0 = C\exp(-r^2)\sin(ar)$, for which cases the presence of $1/r$ is harmless.

Altogether we considered five choices of initial conditions, which are detailed in the subsections below.  As described in the introduction, for each case we consider the combinations $(d,p)=(3,6)$ and $(d,p)=(5,2).$  In the following, we summarize the quantities studied, as well as the corresponding findings. 
For each case, we study the time evolution of:
\begin{itemize}
\item[(i)] The solution $u(t, r)$.  We find that the solution decays over time and travels outward at a constant speed.

\item[(ii)] The critical Sobolev norms $\|u(t)\|_{\dot H^{s_c}}$ and  $\|\partial_t u(t)\|_{\dot H^{s_c-1}}$, with $s_c$ as in (\ref{sc}).  We find that the critical Sobolev norms may initially oscillate, but quickly settle down and converge to a constant. Moreover, our numerical results show that 
\[
\lim_{t\to\infty}\|u(t)\|_{\dot{H}^{s_c}} = \lim_{t\to\infty} \|\partial_t u(t)\|_{\dot{H}^{s_c-1}}.
\] 

\item[(iii)] The potential energy $\|u(t)\|_{L^{p+2}}$ and the supremum norm $\|u(t)\|_{L^\infty}$.  We find that both quantities tend to zero as $t\to\infty$.  More precisely, we observe
\[
\|u(t)\|_{L^{p+2}} \sim (1+t)^{-\frac{(d-1)p}{2(p+2)}} \qtq{and}\|u(t)\|_{L^\infty}\sim (1+t)^{-\frac{d-1}{2}},  
\]
for sufficiently large $t$, matching the decay rates for the underlying linear wave equation.

\item[(iv)] The critical Besov norms $\|u(t)\|_{\dot B_{2,\infty}^{s_c}}$ and $\|\partial_t u(t)\|_{\dot B_{2,\infty}^{s_c-1}}$.
We find that the Besov norms remain bounded and, in fact, become relatively small compared to the critical Sobolev norm as $t\to\infty$.  As discussed in Section~\ref{S:Scattering}, if solutions are sufficiently dispersed in frequency relative to their Sobolev norm, then one can prove scattering by a small-data type argument.
\end{itemize} 
{In what follows, we present our numerical results for our five representative cases, where we will always choose $R_{\max} = 20$, $\Delta r = 4e$-4, and $\Delta t = 1.25e$-4. As discussed previously, our explicit finite difference method does not exactly conserve the energy. However, Table \ref{Table2} shows that it has a good approximate conservation of energy for a long time (e.g. through time $t = 15$).
\begin{table}[htb!]
\begin{center}
\begin{tabular}{|c|cc|cc|}
\hline 
Case & \multicolumn{2}{c|}{$d = 3, \ p = 6$} & \multicolumn{2}{c|}{$d = 5, \ p = 2$}\\
\cline{2-5}
&$E(0)$ & $\Delta_{\max} E(t)/E(0)$ & $E(0)$ & $\Delta_{\max} E(t)/E(0)$  \\
\hline 
1 &  164.184 & 1.6468e-5 & 6.02927 & 2.3386e-7 \\
2 & 1980.14&  2.0506e-5 &  86.1537 & 1.7213e-7 \\
3 & 1996.30 &  1.9631e-5 & 128.380 & 1.7939e-7 \\
4 & 424.502 & 3.0777e-5 &11.8951 & 1.9420e-7\\
5 & 436.317 & 2.7890e-5  &22.3556 & 2.9567e-7 \\
\hline
\end{tabular}
\caption{Numerical illustration of approximate energy conservation, where $\Delta_{\max} E(t) := \max_{t\in[0, 15]}|E(t) - E(0)|$.}\label{Table2}
\end{center}
\end{table}

\subsection{Case 1. Gaussian data.}
In this case, we take {the initial condition} 
\begin{equation}\label{Gaussian}
u_0=4\exp(-r^2),\qquad u_1=0, \qquad\mbox{for \ \ $r \ge 0$.}
\end{equation}
{Figure \ref{Figure1u} presents the time evolution of the solution, which shows that the solution decays over time and} behaves essentially as a wave packet traveling {outward} with constant speed.

%%%%%%%%%%%
\begin{figure}[!htbp]
\centering
\begin{subfigure}{.5\textwidth}
\centering
\includegraphics[width=1\linewidth]{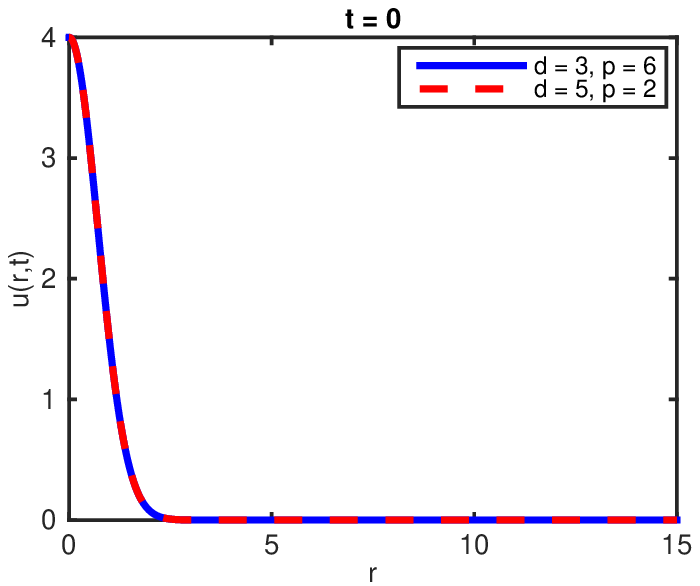}
\end{subfigure}%
\begin{subfigure}{.5\textwidth}
\centering
\includegraphics[width=1\linewidth]{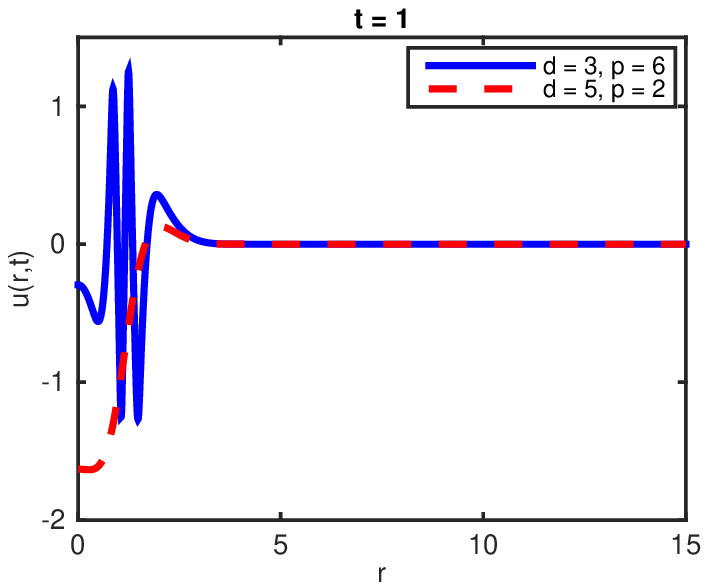}
\end{subfigure}
\begin{subfigure}{.5\textwidth}
\centering
\includegraphics[width=1\linewidth]{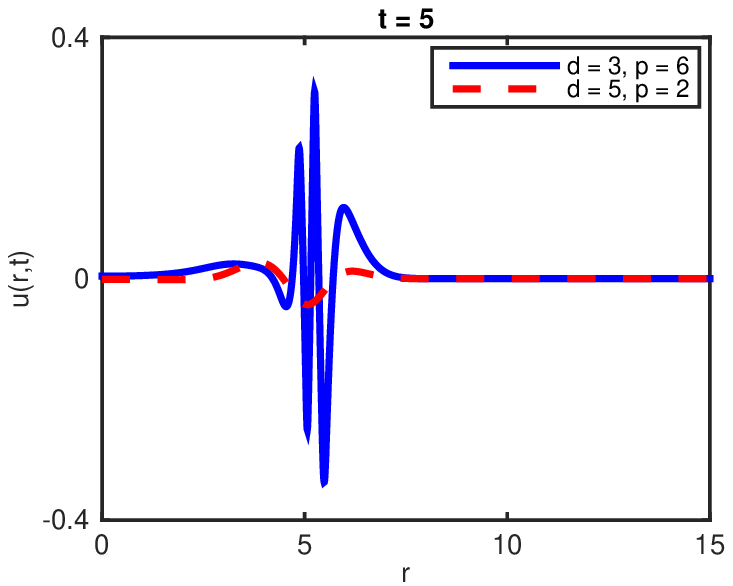}
\end{subfigure}%
\begin{subfigure}{.5\textwidth}
\centering
\includegraphics[width=1\linewidth]{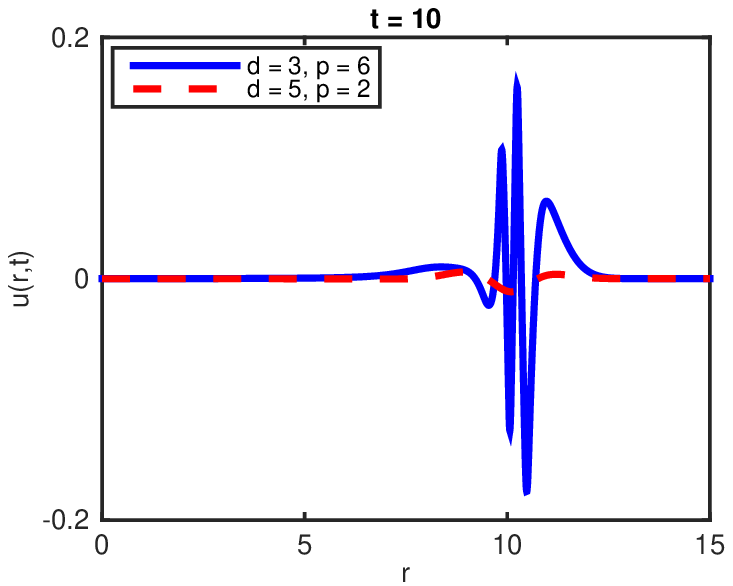}
\end{subfigure}
\caption{\footnotesize   Time evolution of the solution of NLW (\ref{NLW-ode}) with initial condition (\ref{Gaussian}).}\label{Figure1u}
\end{figure}

{Next, we further study} the decay of the solution $u$.  We would like to show decay of both the potential energy (i.e. the $L^{p+2}$-norm), as well as pointwise decay (i.e. the $L^\infty$-norm), both of which are  consistent with scattering.  In fact, we can show that the decay rate for these quantities matches the decay rate for solutions to the linear equation.  To see this, we {present in Figure \ref{Figure1d} the time evolution of the} following quantities and observe boundedness (in fact, convergence) for large $t$:
\begin{equation}\label{decay-plot}
(1+t)^{\frac{(d-1)p}{2(p+2)}}\|u(t)\|_{L^{p+2}}\qtq{and} t^{\frac{d-1}{2}}\|u(t)\|_{L^\infty}.
\end{equation}

\begin{figure}[!htbp]
\centering
\begin{subfigure}{.5\textwidth}
\centering
\includegraphics[width=1\linewidth]{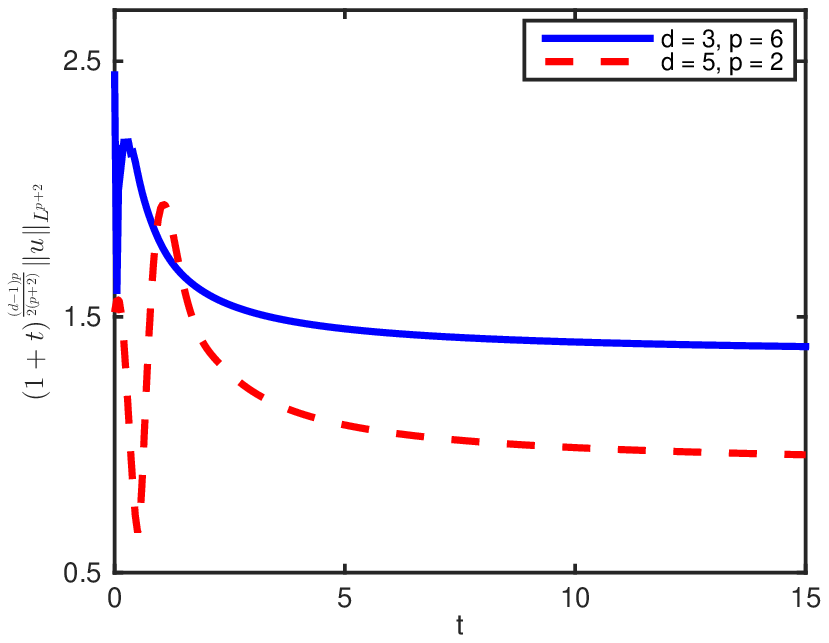}
\end{subfigure}%
\begin{subfigure}{.5\textwidth}
\centering
\includegraphics[width=1\linewidth]{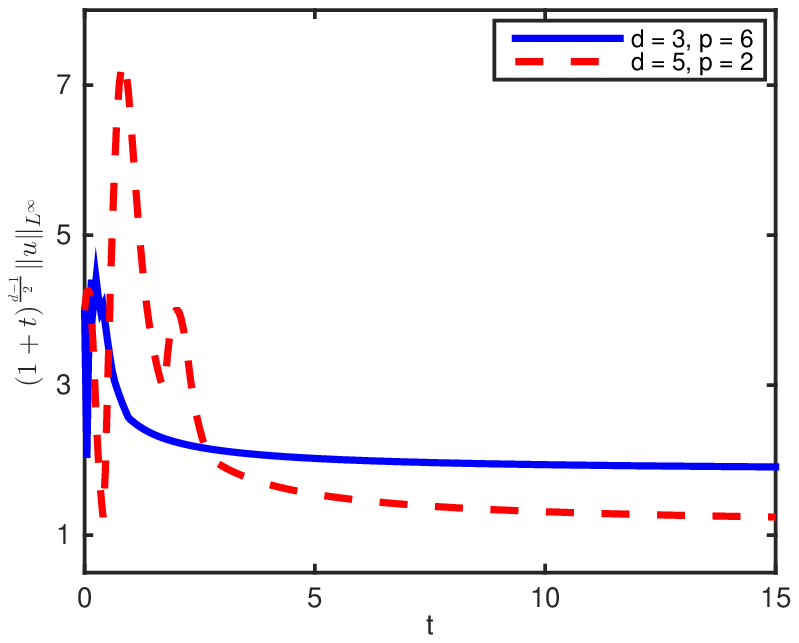}
\end{subfigure}
\caption{\footnotesize   Decay of higher norms in NLW with initial condition (\ref{Gaussian}).}\label{Figure1d}
\end{figure}

Figure \ref{Figure1hs} represents the main result of this paper, namely, numerical evidence for the boundedness of the critical Sobolev norms.  In fact, after some initial {oscillation}, we see that both the $\dot H^{s_c}$-norm of $u$ and the $\dot H^{s_c-1}$ norm of $\partial_t u$
 quickly settle down and converge.  {Moreover, our numerical results show that
\[
\lim_{t\to\infty}\|u(t)\|_{\dot{H}^{s_c}} = \lim_{t\to\infty}\|\partial_t u(t)\|_{\dot{H}^{s_c-1}}.
\]
}

\begin{figure}[!htbp]
\centering
\begin{subfigure}{.5\textwidth}
\centering
\includegraphics[width=1\linewidth]{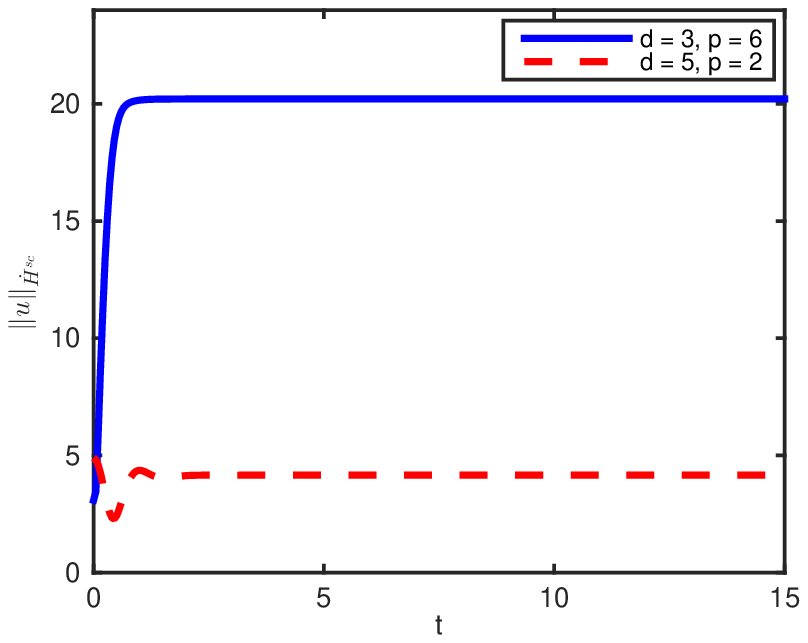}
\end{subfigure}%
\begin{subfigure}{.5\textwidth}
\centering
\includegraphics[width=1\linewidth]{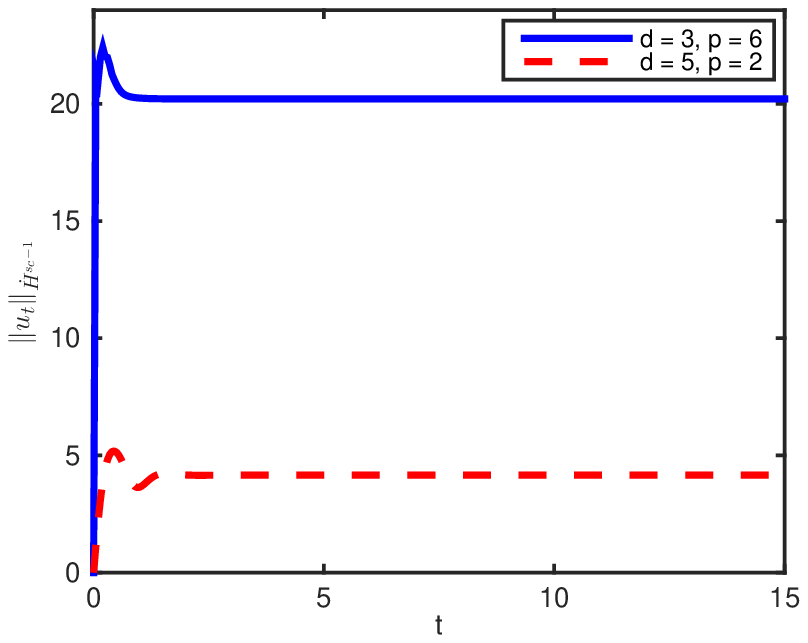}
\end{subfigure}
\caption{\footnotesize   Boundedness of critical Sobolev norms for NLW with initial condition (\ref{Gaussian}).}
\label{Figure1hs}
\end{figure}

Finally, we plot the critical Besov norms (namely, $\dot B^{s_c}_{2,\infty}$ for $u$ and $\dot B^{s_c-1}_{2,\infty}$ for $\partial_t u$) {in Figure \ref{Figure1b}}.  While these norms do not converge to zero, we can observe that they become relatively small compared to the critical Sobolev norms {in Figure \ref{Figure1hs}}.  As discussed in Section~\ref{S:Scattering}, it is possible to prove scattering in such a scenario.

\newpage

\begin{figure}[!htbp]
\centering
\begin{subfigure}{.5\textwidth}
\centering
\includegraphics[width=1\linewidth]{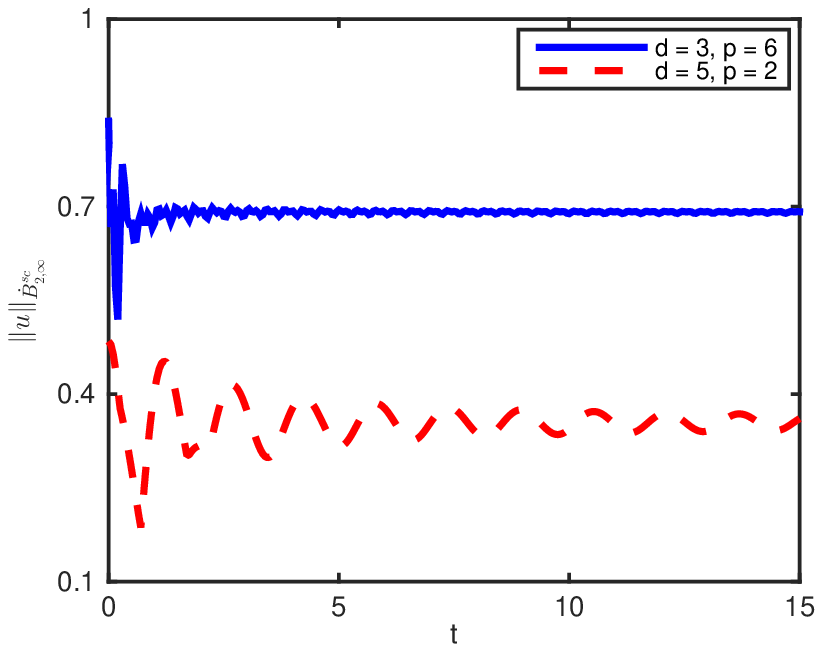}
\end{subfigure}%
\begin{subfigure}{.5\textwidth}
\centering
\includegraphics[width=1\linewidth]{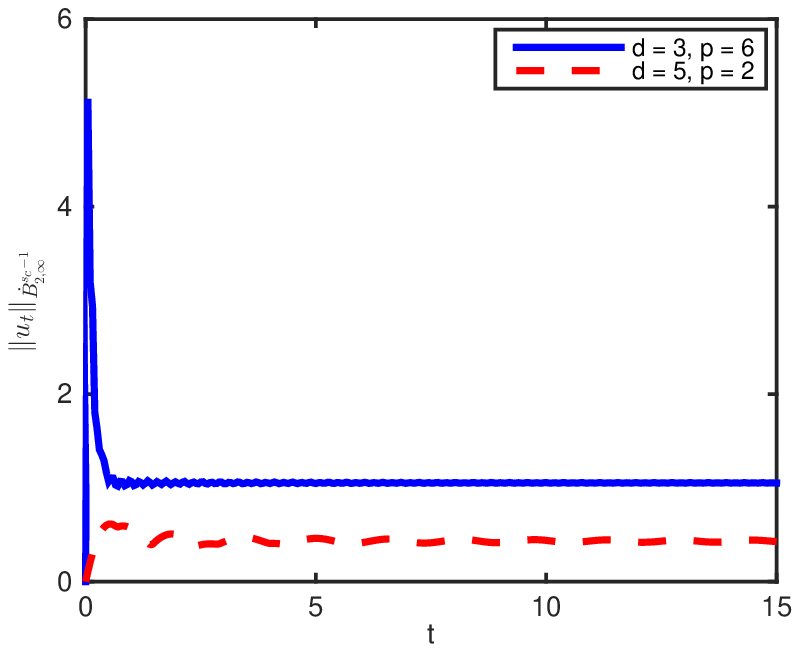}
\end{subfigure}
\caption{\footnotesize   Relative smallness Besov norms for NLW with initial condition (\ref{Gaussian}).}\label{Figure1b}
\end{figure}

%%%%%%%%%%%%%%%%%%%

\subsection{Case 2. Ring data.} In Case 2 we take
\begin{equation}\label{ring0}
u_0 = 10r^2\exp(-r^2),\qquad u_1=0, \qquad\mbox{for \ \ $r \ge 0$.}
\end{equation}
As we observed the same phenomena as in Case 1, we will be somewhat brief in our presentation.

{Figure \ref{Figure2u} presents the time evolution of the solution. It shows that the solution is more oscillating than the case with Gaussian initial conditions; however, similar to Case 1, the solution still decays over time and travels outward.}

\begin{figure}[!htbp]
\centering
\begin{subfigure}{.5\textwidth}
\centering
\includegraphics[width=1\linewidth]{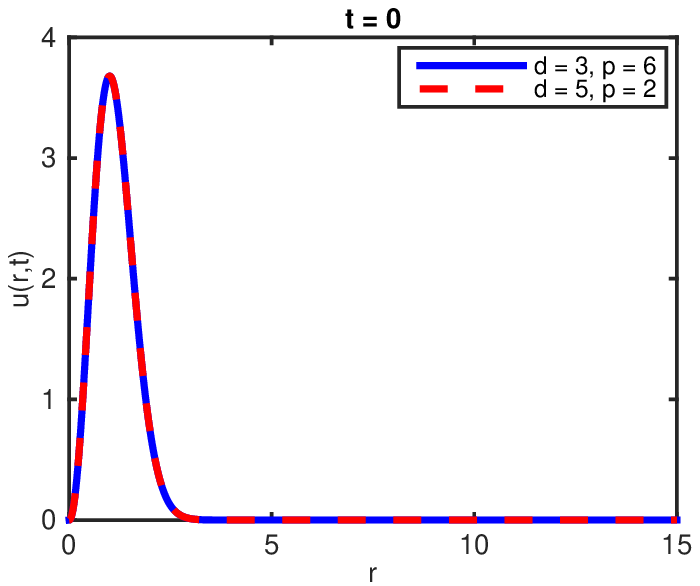}
\end{subfigure}%
\begin{subfigure}{.5\textwidth}
\centering
\includegraphics[width=1\linewidth]{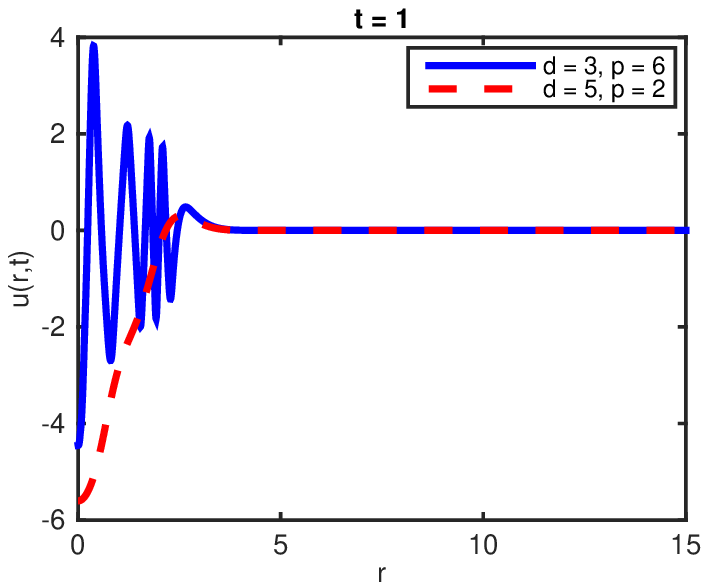}
\end{subfigure}
\begin{subfigure}{.5\textwidth}
\centering
\includegraphics[width=1\linewidth]{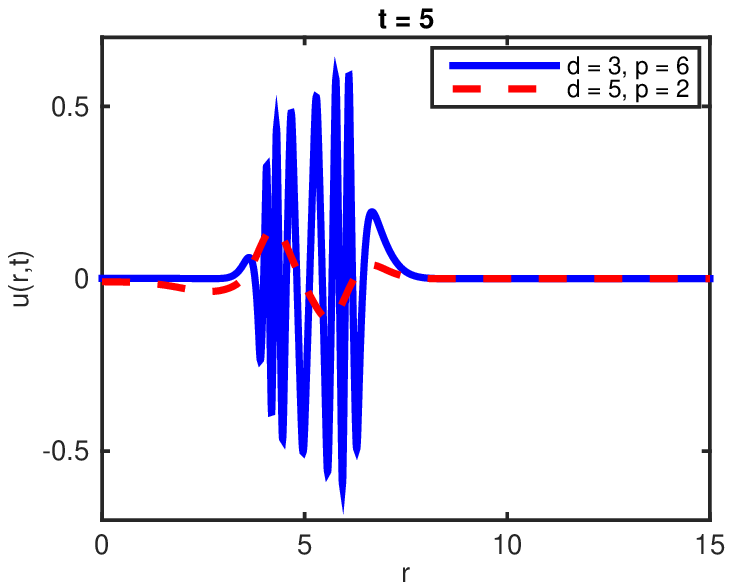}
\end{subfigure}%
\begin{subfigure}{.5\textwidth}
\centering
\includegraphics[width=1\linewidth]{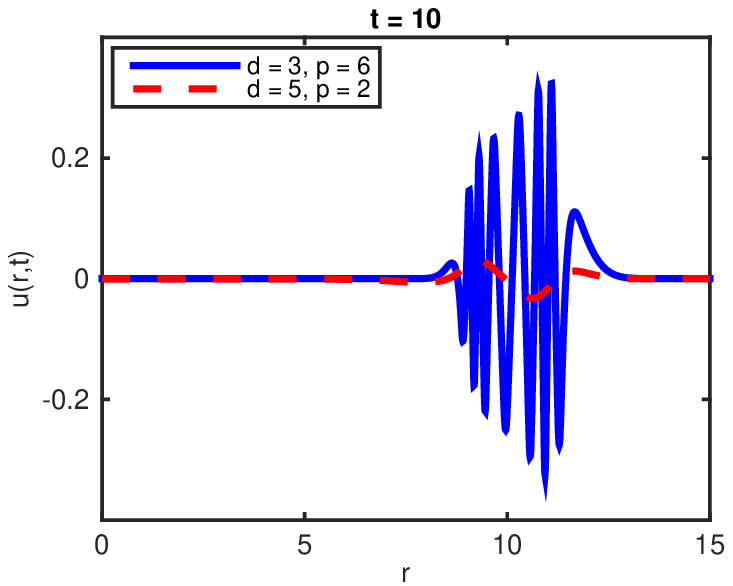}
\end{subfigure}
\caption{\footnotesize   Time evolution of solution of NLW (\ref{NLW-ode}) with initial condition (\ref{ring0}).}\label{Figure2u}
\end{figure}

{Figure \ref{Figure2norm} presents the critical Sobolev norms, Besov norms, and higher Lebesgue norms over time. It shows that the critical Sobolev norms $\|u\|_{\dot{H}^{s_c}}$ and $\|\partial_t u\|_{\dot{H}^{s_c-1}}$ quickly converge and stay at the same constant after a relatively small time, while the Besov norms are eventually bounded by a relatively small constant compared to the Sobolev norms.  The time evolution of the higher Lebesgue norms shows that the decay of $\|u\|_{L^{p+2}}$ and $\|u\|_{L^\infty}$ are qualitatively the same as in Case 1.
}

\begin{figure}[!htbp]
\centering
\begin{subfigure}{.5\textwidth}
\centering
\includegraphics[width=1\linewidth]{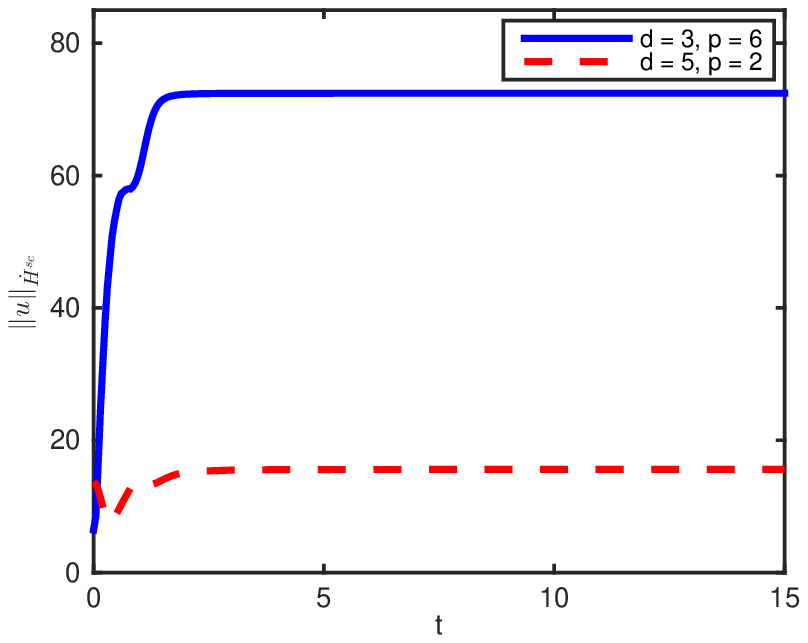}
\end{subfigure}%
\begin{subfigure}{.5\textwidth}
\centering
\includegraphics[width=1\linewidth]{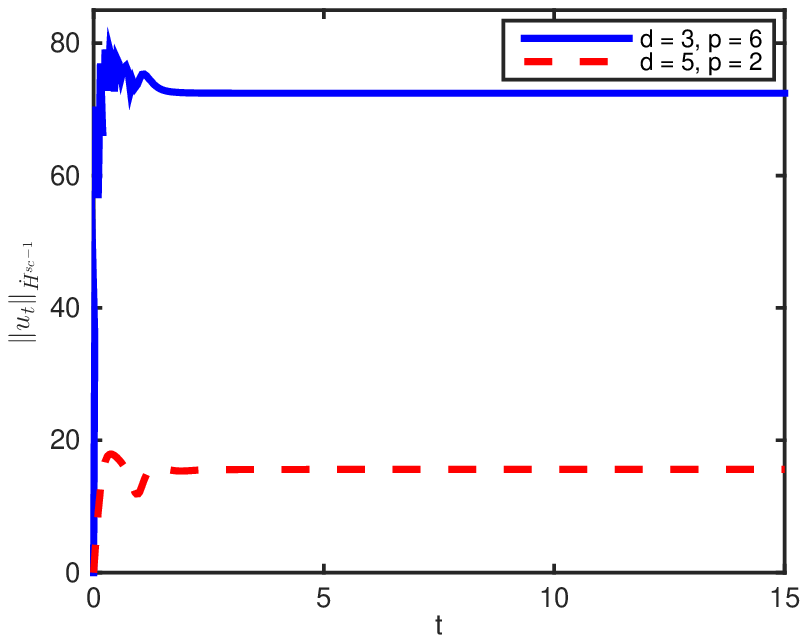}
\end{subfigure}
\begin{subfigure}{.5\textwidth}
\centering
\includegraphics[width=1\linewidth]{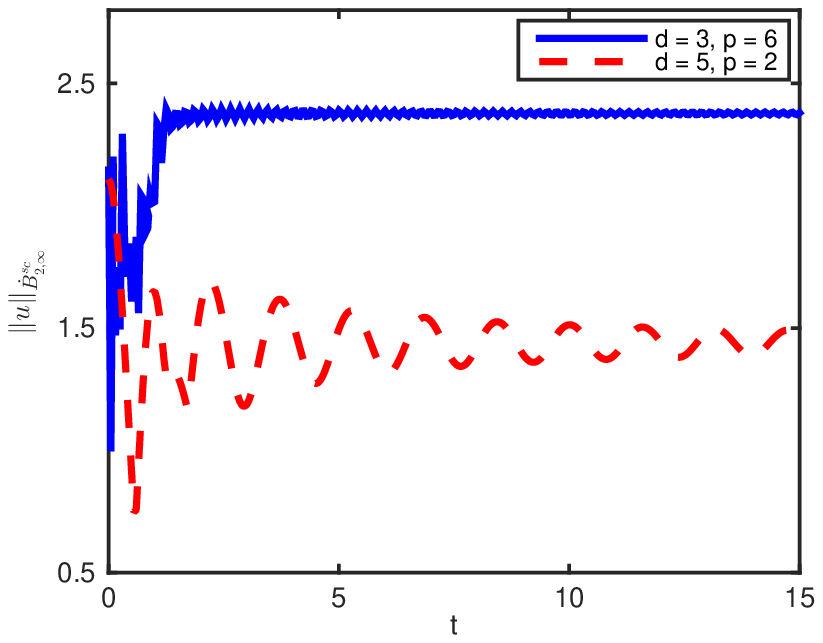}
\end{subfigure}%
\begin{subfigure}{.5\textwidth}
\centering
\includegraphics[width=1\linewidth]{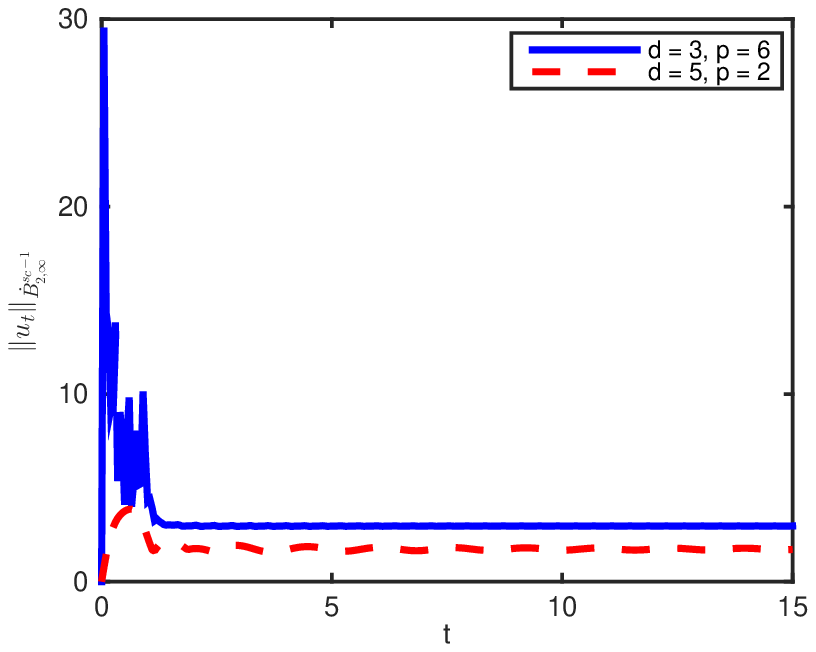}
\end{subfigure}
\begin{subfigure}{.5\textwidth}
\centering
\includegraphics[width=1\linewidth]{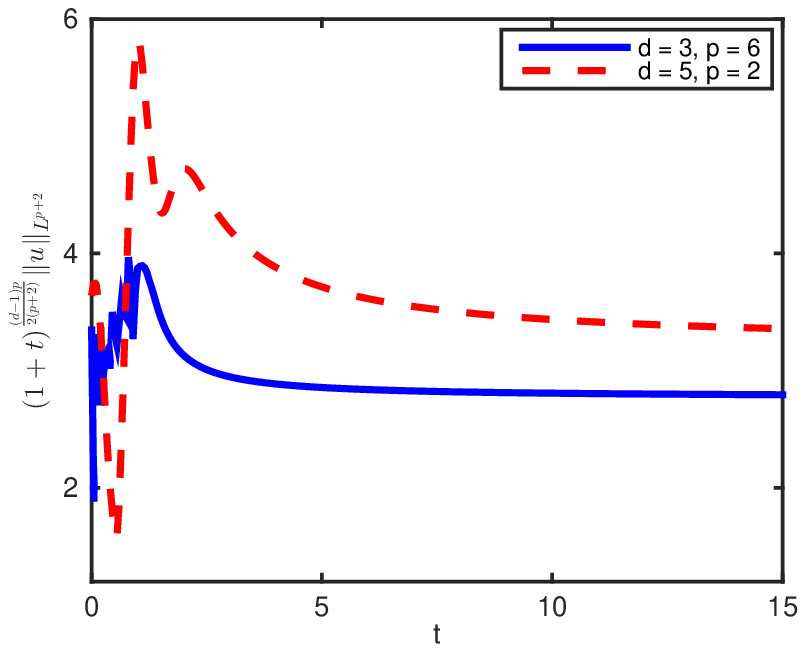}
\end{subfigure}%
\begin{subfigure}{.5\textwidth}
\centering
\includegraphics[width=1\linewidth]{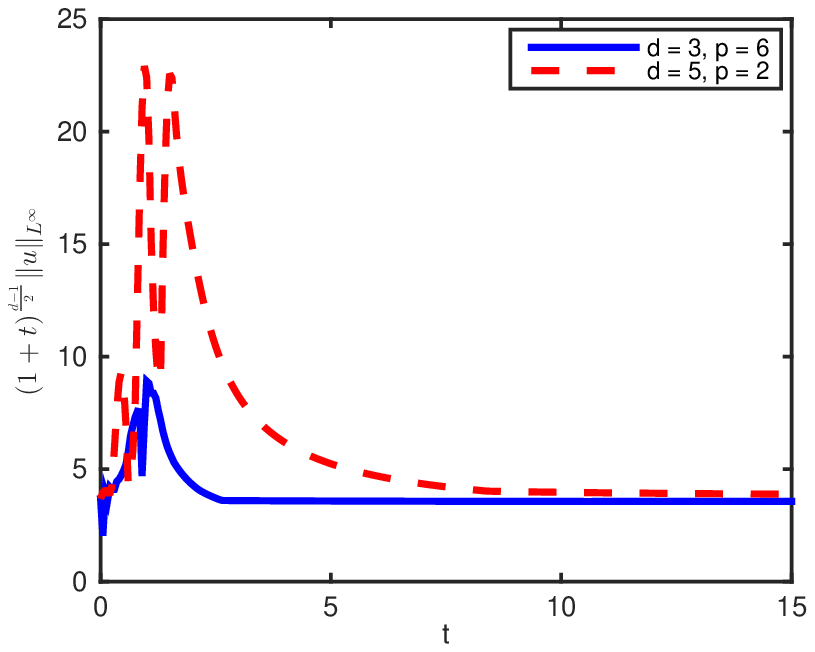}
\end{subfigure}
\caption{\footnotesize   Time evolution of Sobolev norms (first row), Besov norms (second row), and higher norms (last row) for NLW with initial condition (\ref{ring0}).}\label{Figure2norm}
\end{figure}

%%%%%%%%%%%%%%%%%%%%
\clearpage
\subsection{Case 3. Incoming ring data.} In Case 3 we take
\begin{equation}\label{ring1}
u_0 = 10r^2\exp(-r^2),\qquad u_1 = \partial_r u_0 + \tfrac{d-2}{r}u_0, \qquad\mbox{for \ \ $r \ge 0$.}
\end{equation}
As described above, such initial data will lead to some initial `focusing' at the level of the linear wave equation.  This effect is countered by the defocusing nonlinearity.

{Figure \ref{Figure3u} illustrates the time evolution of the solution $u$, where the plot for $t = 0$ is the same as that in Figure \ref{Figure2u}. Due to the different initial velocity $u_1$, the evolution of solution in Cases 2 and 3 is initially quite different (cf. Figures \ref{Figure2u} and \ref{Figure3u} for $t = 1$), but after a long time the influence of initial velocity greatly decreases. }

\begin{figure}[!htbp]
\centering
\begin{subfigure}{.5\textwidth}
\centering
\includegraphics[width=1\linewidth]{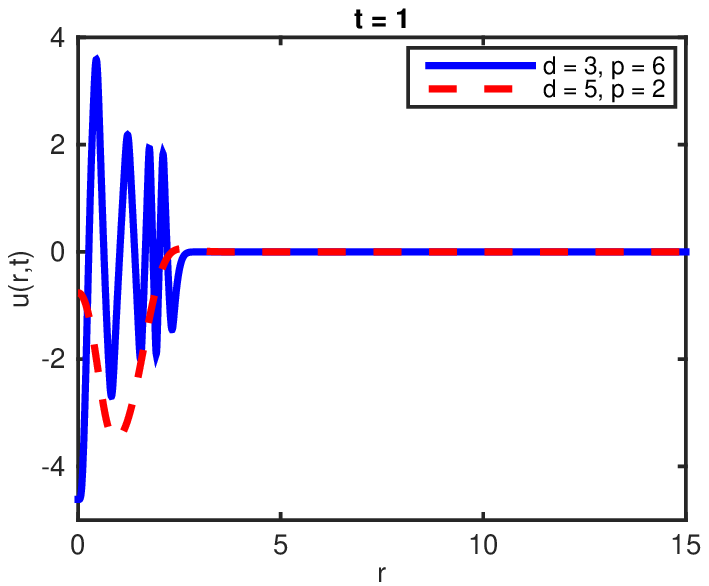}
\end{subfigure}%
\begin{subfigure}{.5\textwidth}
\centering
\includegraphics[width=1\linewidth]{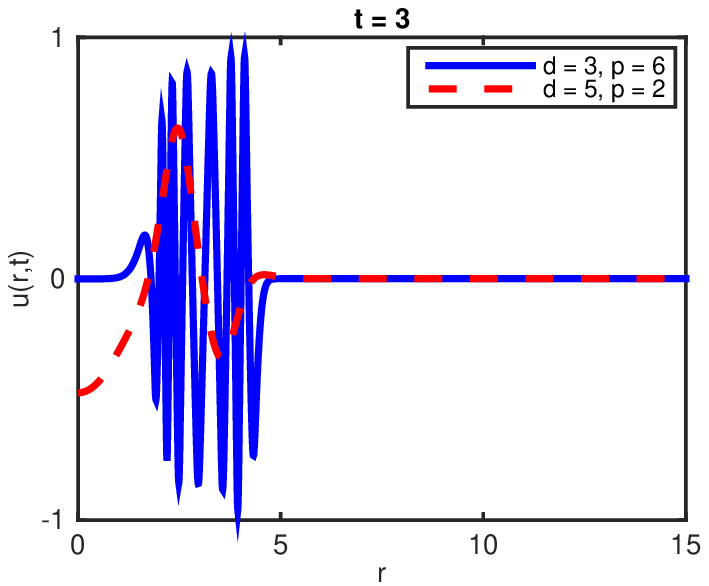}
\end{subfigure}
\begin{subfigure}{.5\textwidth}
\centering
\includegraphics[width=1\linewidth]{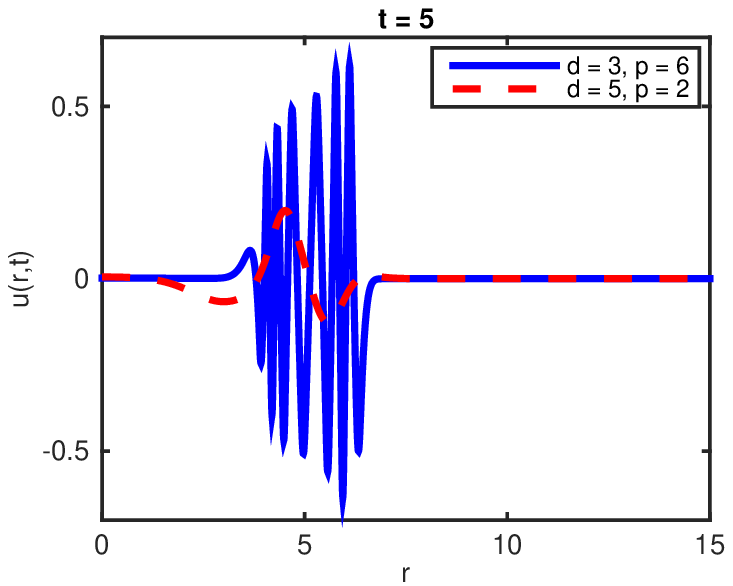}
\end{subfigure}%
\begin{subfigure}{.5\textwidth}
\centering
\includegraphics[width=1\linewidth]{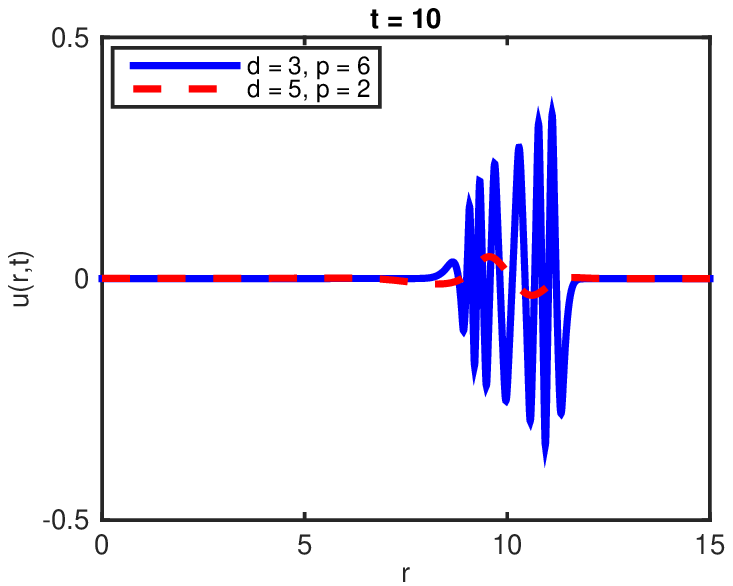}
\end{subfigure}
\caption{\footnotesize   Time evolution of the solution to NLW (\ref{NLW-ode}) with initial condition (\ref{ring1}).}\label{Figure3u}
\end{figure}

{The time evolution of the norms in Figure \ref{Figure3norm} is quite similar to the observations in Figure \ref{Figure2norm}.  This suggests that the initial velocity $u_1$ ultimately plays an insignificant role in the study of long-time behavior of solution.}

\begin{figure}[!htbp]
\centering
\begin{subfigure}{.5\textwidth}
\centering
\includegraphics[width=1\linewidth]{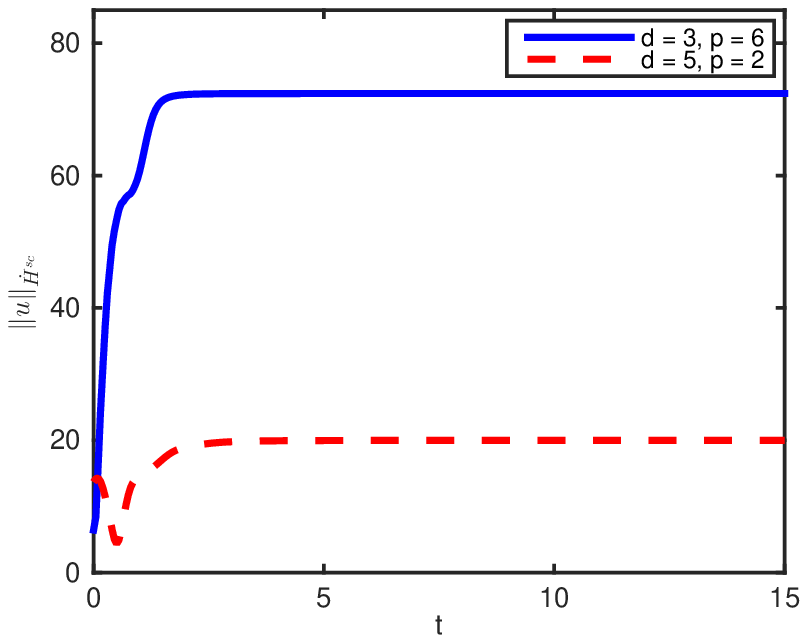}
\end{subfigure}%
\begin{subfigure}{.5\textwidth}
\centering
\includegraphics[width=1\linewidth]{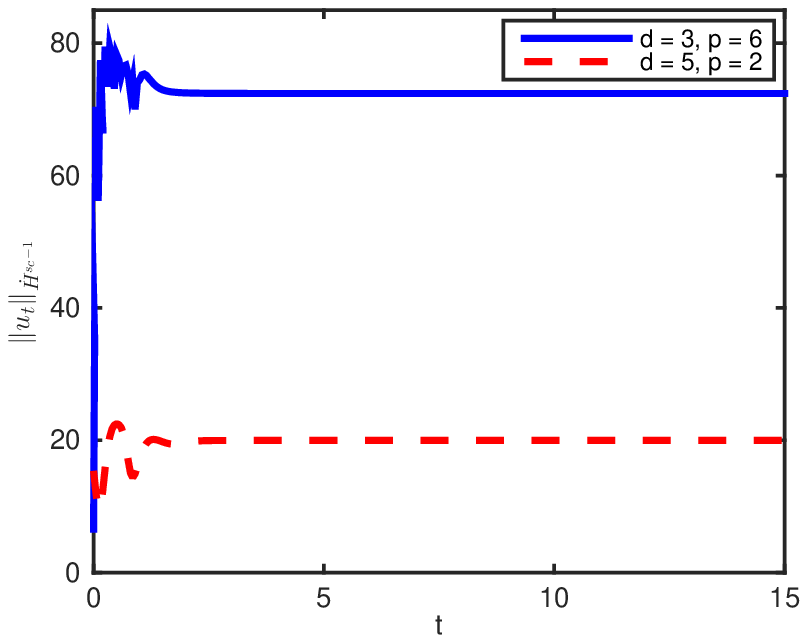}
\end{subfigure}
\begin{subfigure}{.5\textwidth}
\centering
\includegraphics[width=1\linewidth]{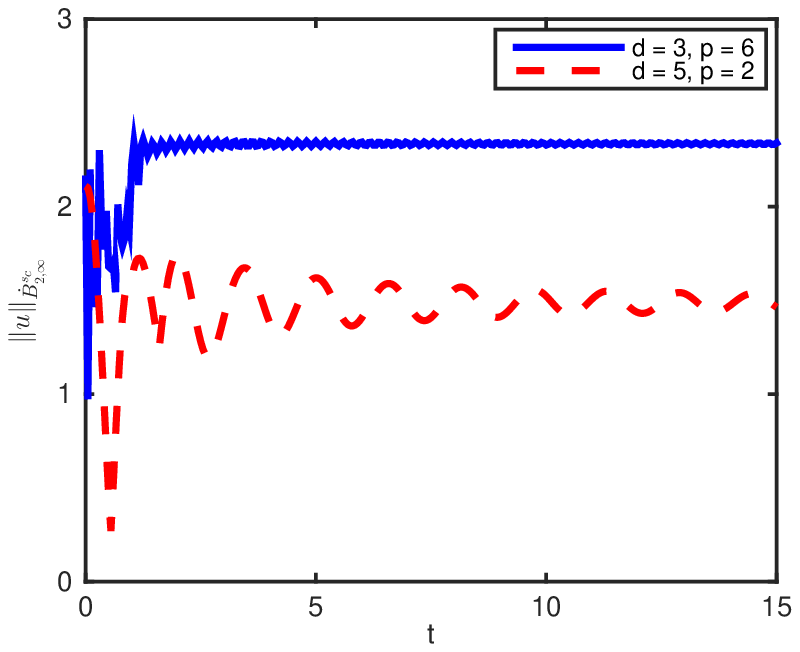}
\end{subfigure}%
\begin{subfigure}{.5\textwidth}
\centering
\includegraphics[width=1\linewidth]{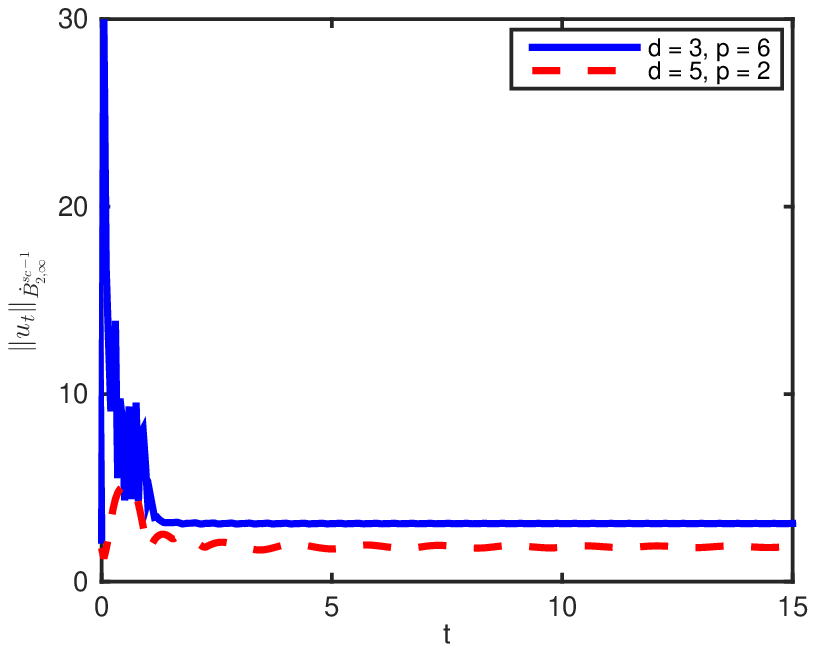}
\end{subfigure}
\begin{subfigure}{.5\textwidth}
\centering
\includegraphics[width=1\linewidth]{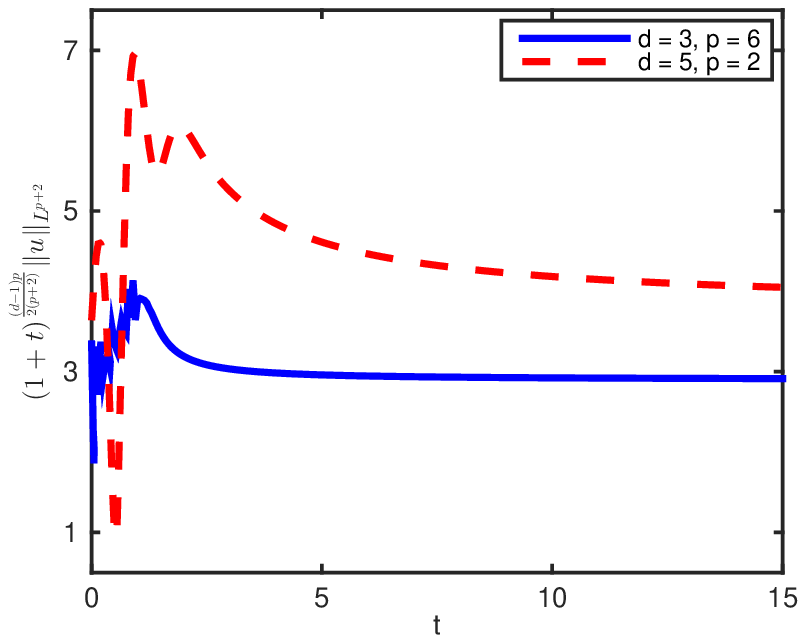}
\end{subfigure}%
\begin{subfigure}{.5\textwidth}
\centering
\includegraphics[width=1\linewidth]{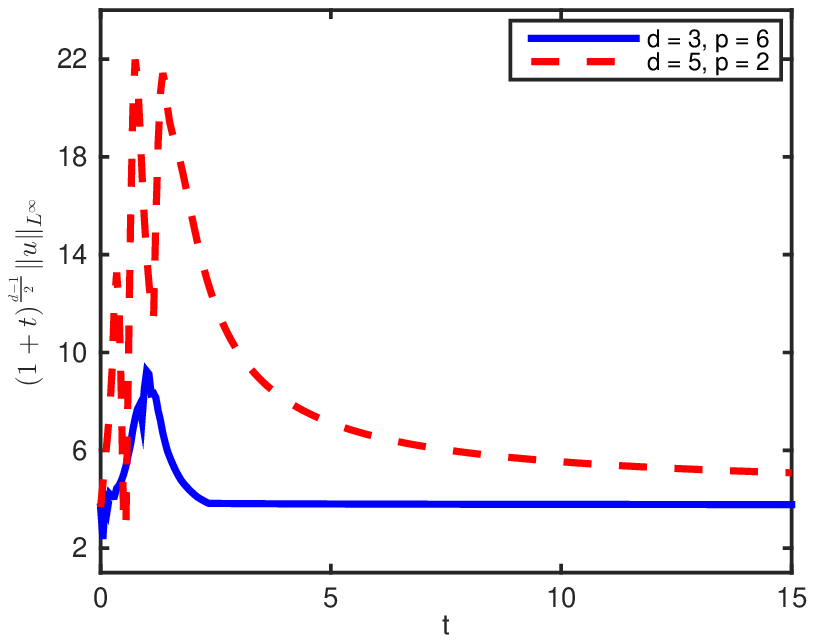}
\end{subfigure}
\caption{\footnotesize   Time evolution of the Sobolev norms (first row), Besov norms (second row), and higher norms (last row) for NLW with initial condition (\ref{ring1}).}\label{Figure3norm}
\end{figure}

%%%%%%%%%%%%%%%%%%%%

\clearpage

\subsection{Case 4. Oscillatory Gaussian data} Next, we consider the initial data as follows:

{ \begin{equation}\label{gring1}
u_0 = 5\exp(-r^2)\sin(3r),\qquad u_1 = 0, \qquad\mbox{for \ \ $r \ge 0$.}
\end{equation}}

The numerical results are consistent with all previous cases: the pointwise decay and outward travel of the solution, with higher norms decaying at rates consistent with the linear wave equation.  Moreover, the Sobolev norms settle down, and the Besov norms become small compared to the Sobolev norms.

\begin{figure}[!htbp]
\centering
\begin{subfigure}{.5\textwidth}
\centering
\includegraphics[width=1\linewidth]{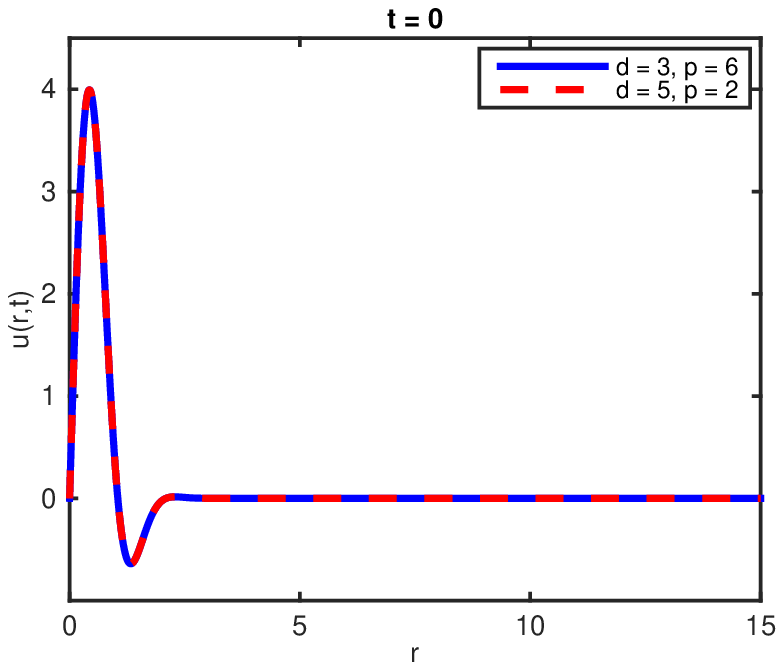}
\end{subfigure}%
\begin{subfigure}{.5\textwidth}
\centering
\includegraphics[width=1\linewidth]{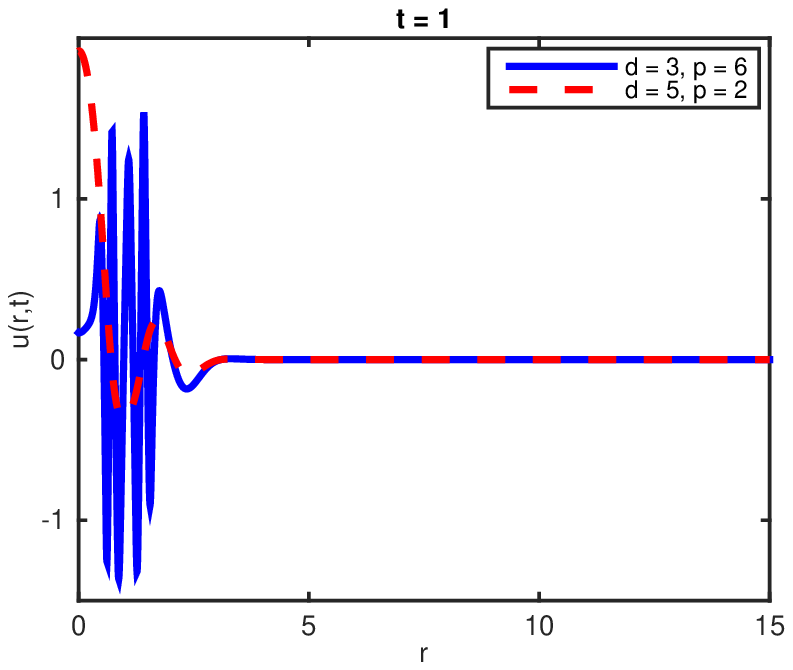}
\end{subfigure}
\begin{subfigure}{.5\textwidth}
\centering
\includegraphics[width=1\linewidth]{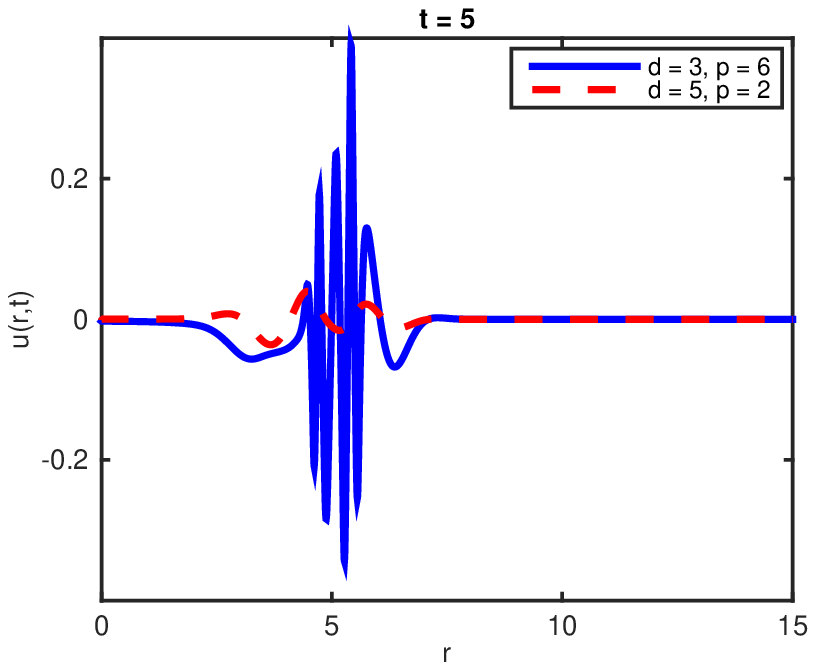}
\end{subfigure}%
\begin{subfigure}{.5\textwidth}
\centering
\includegraphics[width=1\linewidth]{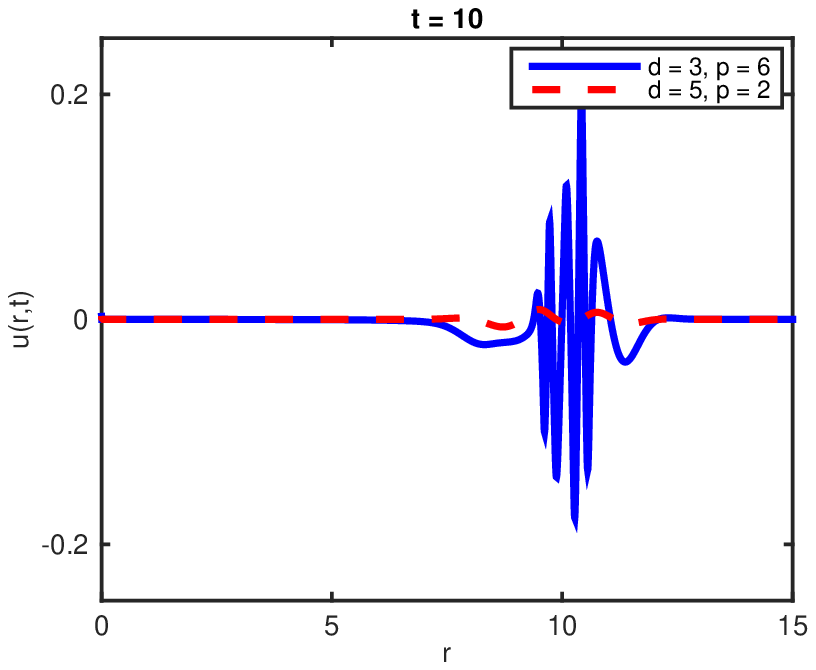}
\end{subfigure}
\caption{\footnotesize   Time evolution of solution of NLW (\ref{NLW-ode}) with initial condition (\ref{gring1}).}\label{Figure4u}
\end{figure}

\begin{figure}[!htbp]
\centering
\begin{subfigure}{.5\textwidth}
\centering
\includegraphics[width=1\linewidth]{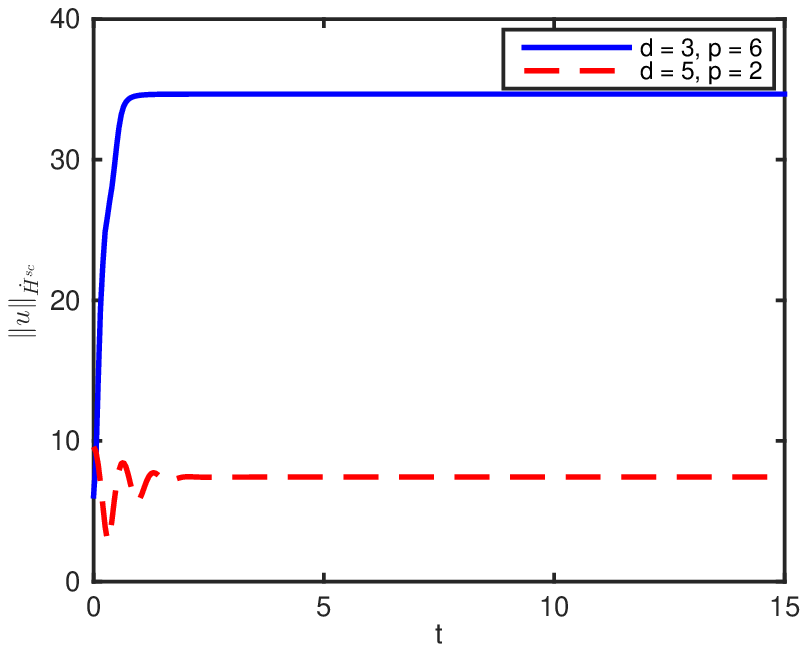}
\end{subfigure}%
\begin{subfigure}{.5\textwidth}
\centering
\includegraphics[width=1\linewidth]{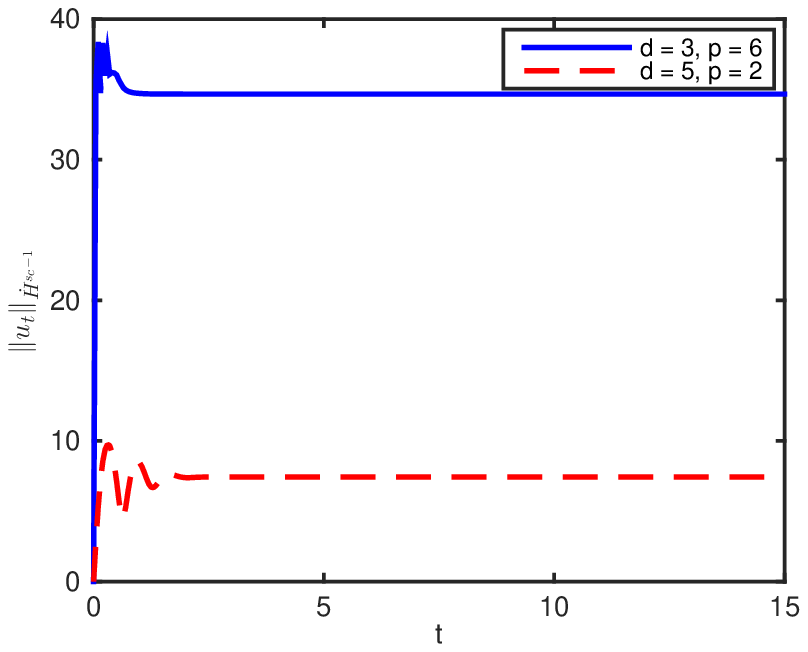}
\end{subfigure}
\begin{subfigure}{.5\textwidth}
\centering
\includegraphics[width=1\linewidth]{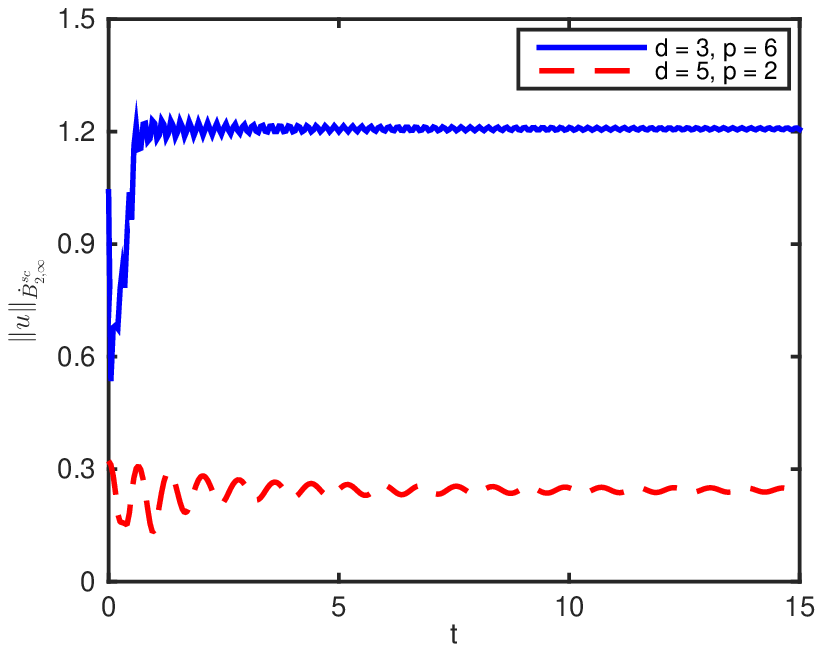}
\end{subfigure}%
\begin{subfigure}{.5\textwidth}
\centering
\includegraphics[width=1\linewidth]{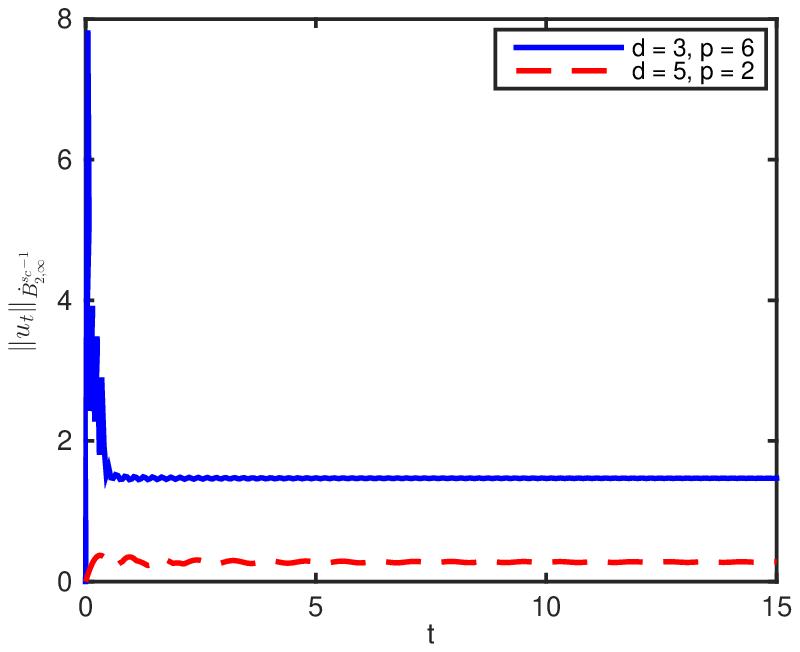}
\end{subfigure}
\begin{subfigure}{.5\textwidth}
\centering
\includegraphics[width=1\linewidth]{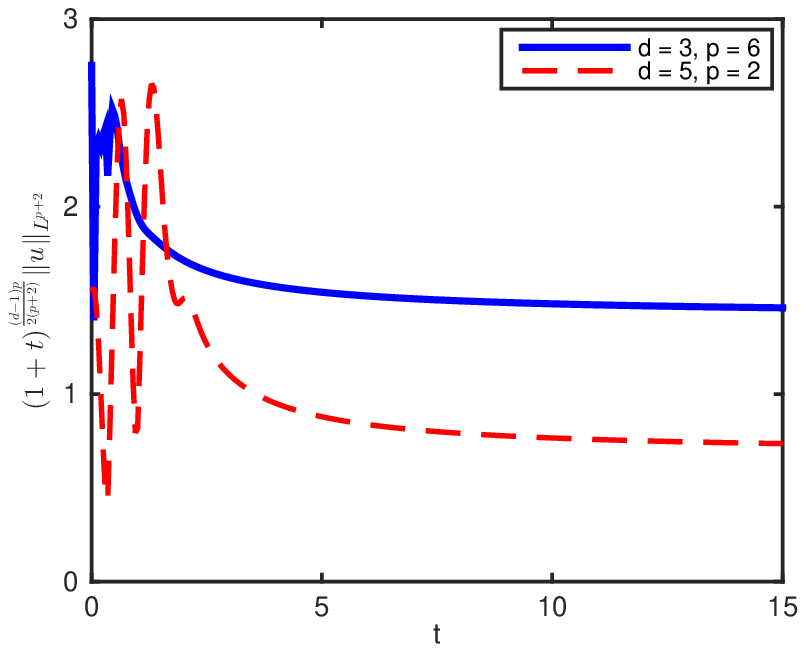}
\end{subfigure}%
\begin{subfigure}{.5\textwidth}
\centering
\includegraphics[width=1\linewidth]{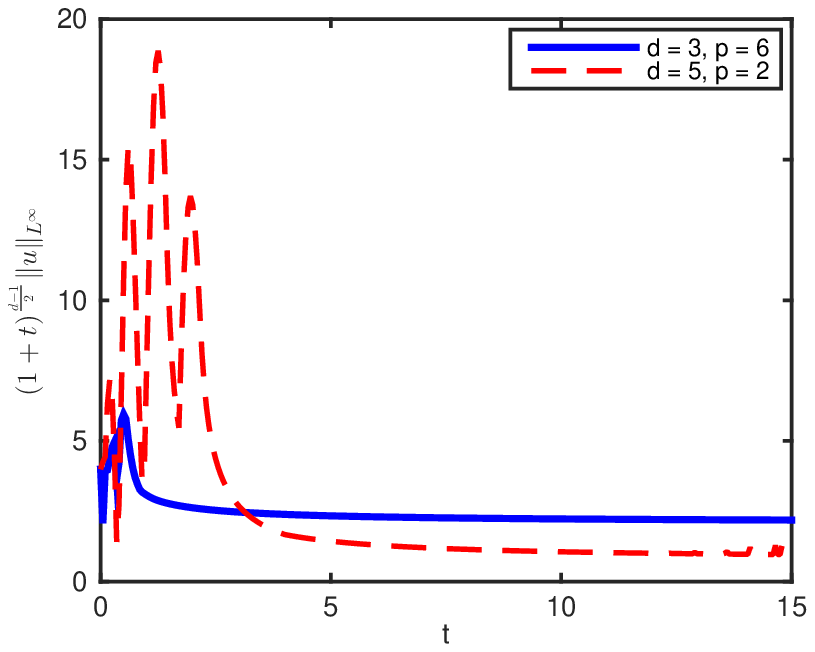}
\end{subfigure}
\caption{\footnotesize   Time evolution of the Sobolev norms (first row), Besov norms (second row), and higher norms (last row) for NLW with initial condition (\ref{gring1}).}\label{Figure4norm}
\end{figure}

%%%%%%%%%%%%%%%%%%%%

\clearpage

\subsection{Case 5. Incoming oscillatory Gaussian data} Finally, we consider the same initial position as $u_0$, but with incoming data: 
%%%%%%%
{ \begin{equation}\label{ring2}
u_0 = 5\exp(-r^2)\sin(3r),\qquad u_1 = \partial_r u_0 + \tfrac{d-2}{r}u_0, \qquad\mbox{for \ \ $r \ge 0$.}
\end{equation}}

Once again, we find that making the data incoming ultimately has little effect on the long-time behavior of solutions, and the numerical results are consistent with all that we have seen above.

\begin{figure}[!htbp]
\centering
\begin{subfigure}{.5\textwidth}
\centering
\includegraphics[width=1\linewidth]{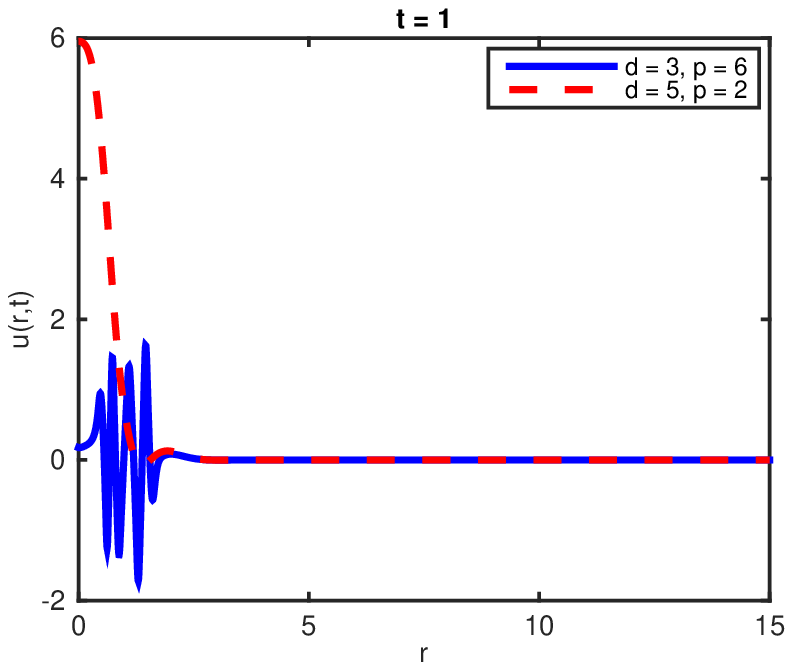}
\end{subfigure}%
\begin{subfigure}{.5\textwidth}
\centering
\includegraphics[width=1\linewidth]{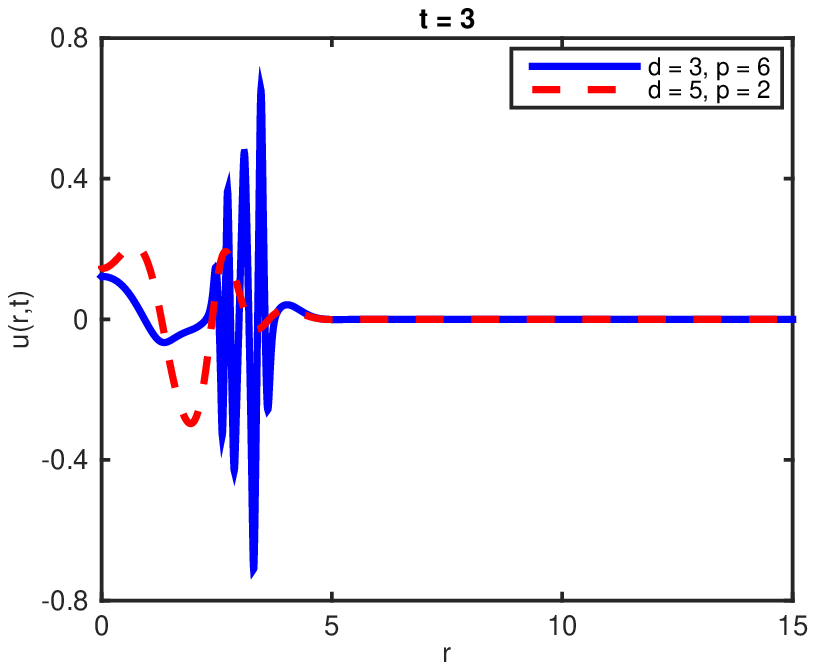}
\end{subfigure}
\begin{subfigure}{.5\textwidth}
\centering
\includegraphics[width=1\linewidth]{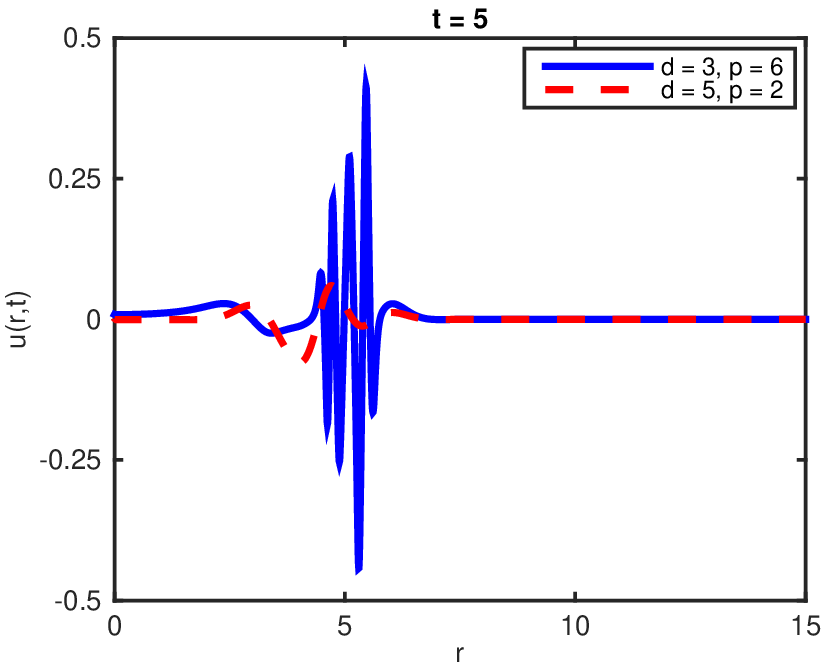}
\end{subfigure}%
\begin{subfigure}{.5\textwidth}
\centering
\includegraphics[width=1\linewidth]{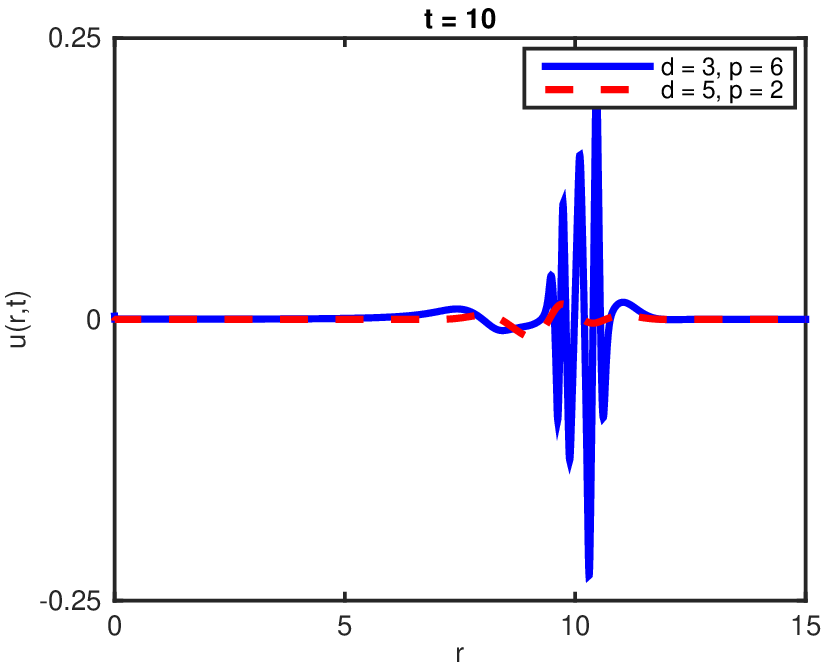}
\end{subfigure}
\caption{\footnotesize   Time evolution of solution of NLW (\ref{NLW-ode}) with initial condition (\ref{ring2}).}\label{Figure4u}
\end{figure}

\begin{figure}[!htbp]
\centering
\begin{subfigure}{.5\textwidth}
\centering
\includegraphics[width=1\linewidth]{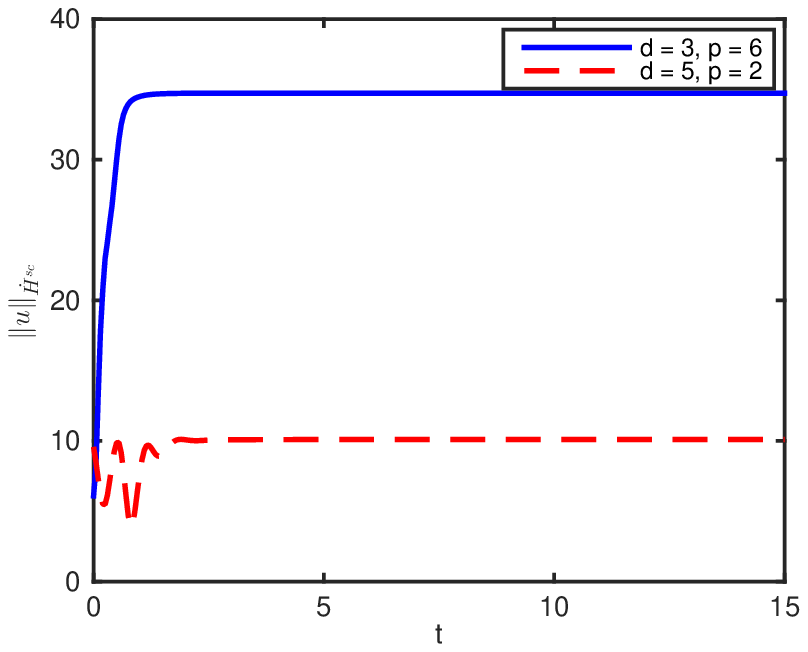}
\end{subfigure}%
\begin{subfigure}{.5\textwidth}
\centering
\includegraphics[width=1\linewidth]{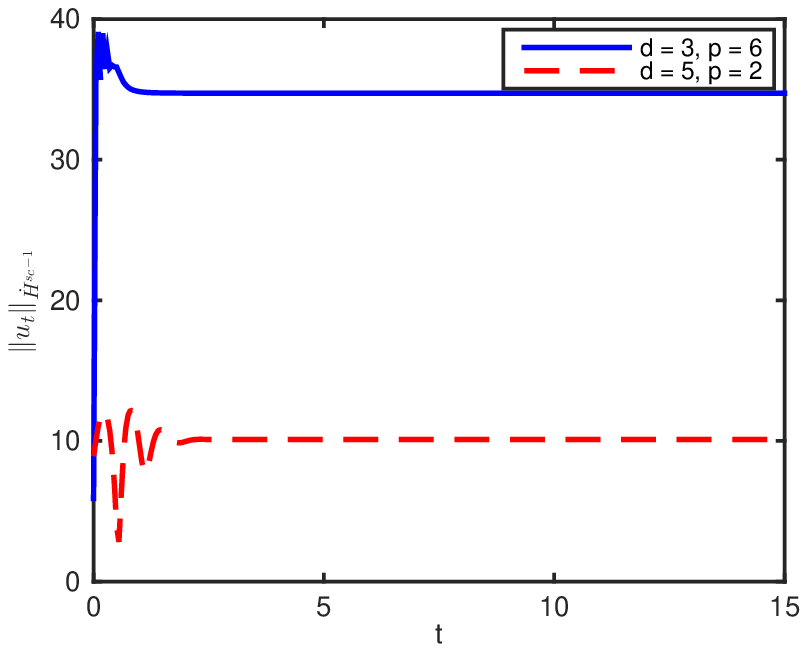}
\end{subfigure}
\begin{subfigure}{.5\textwidth}
\centering
\includegraphics[width=1\linewidth]{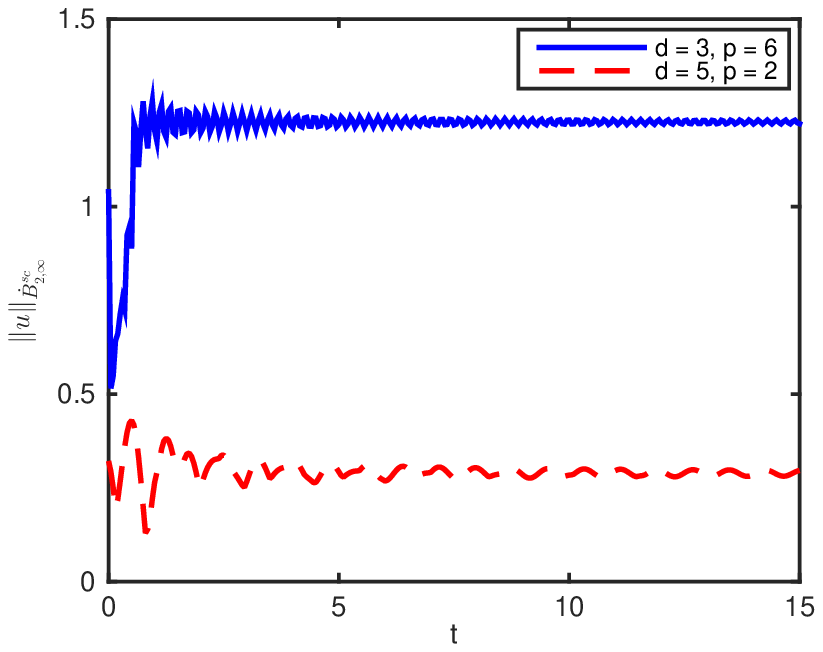}
\end{subfigure}%
\begin{subfigure}{.5\textwidth}
\centering
\includegraphics[width=1\linewidth]{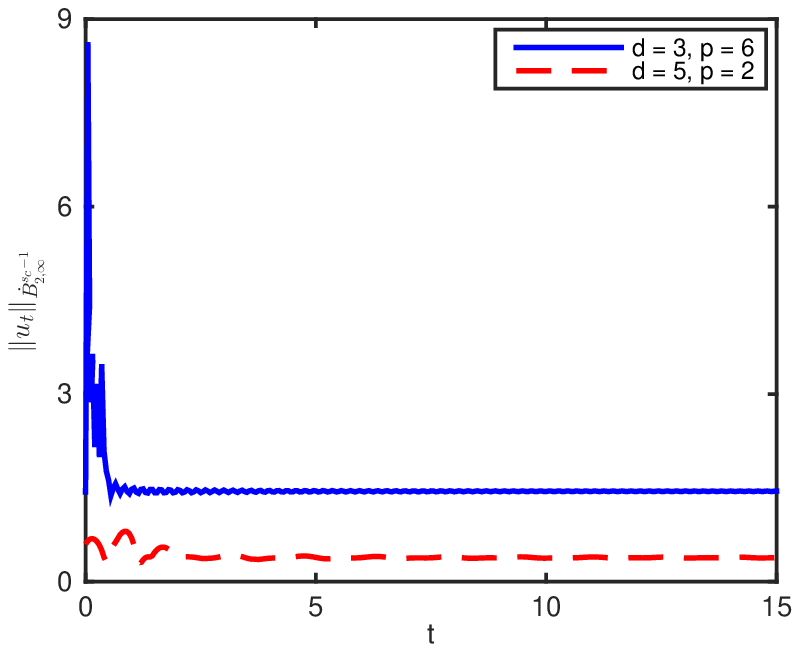}
\end{subfigure}
\begin{subfigure}{.5\textwidth}
\centering
\includegraphics[width=1\linewidth]{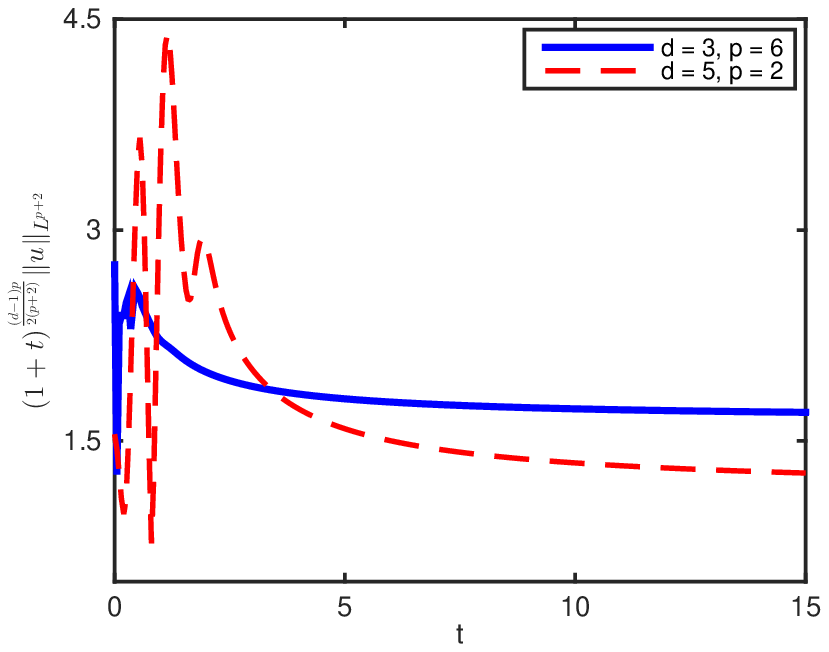}
\end{subfigure}%
\begin{subfigure}{.5\textwidth}
\centering
\includegraphics[width=1\linewidth]{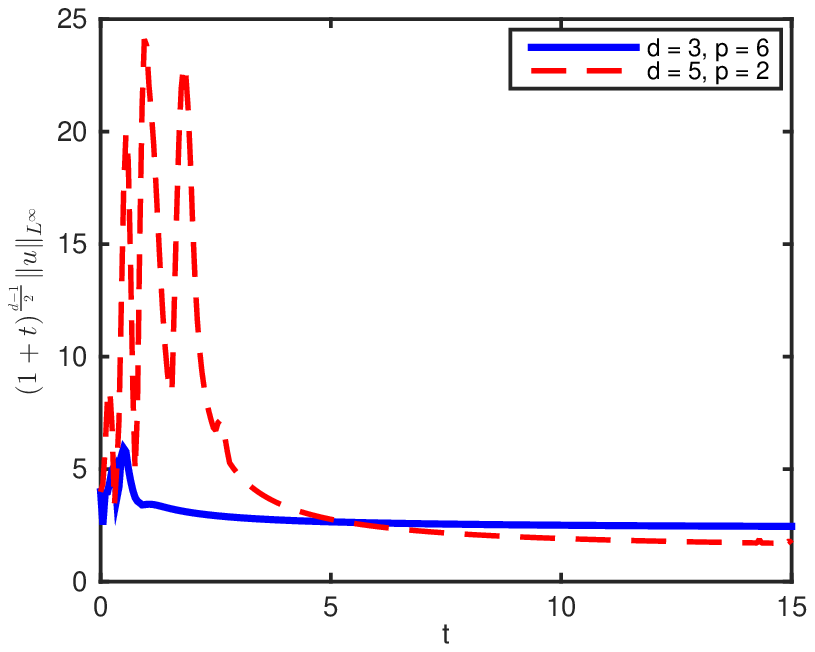}
\end{subfigure}
\caption{\footnotesize   Time evolution of the Sobolev norms (first row), Besov norms (second row), and higher norms (last row) for NLW with initial condition (\ref{ring2}).}\label{Figure5norm}
\end{figure}

\clearpage

\section{A simple scattering result}\label{S:Scattering}

In this section, we demonstrate that if a solution to \eqref{nlw} is sufficiently dispersed in frequency compared to its critical Sobolev norm, then it scatters.  We present this result because the numerical results presented above demonstrate that this phenomenon occurs, at least for the sets of initial conditions that we considered.  We note, however, that this result is still fundamentally a small-data result.  It would require some significant new input to rigorously demonstrate that such dispersion in frequency occurs in general. 

For the sake of concreteness, we focus on the particular case $(d,p)=(3,6)$ considered above, in which case $s_c=\frac76$. 

\begin{proposition}\label{P:A1} Suppose $\|(u_0,u_1)\|_{\dot H^{\frac76}\times\dot H^{\frac16}}=E$.  Then there exists $\eta_0=\eta_0(E)>0$ such that if
\[
\|(u_0,u_1)\|_{\dot B^{\frac76}_{2,\infty}\times\dot B^{\frac16}_{2,\infty}}<\eta<\eta_0,
\]
then the solution to $-\partial_t^2 u + \Delta u = |u|^{6}u$ with data $(u_0,u_1)$ is global in time and obeys the space-time bounds
\[
\|u\|_{L_{t,x}^{12}(\R\times\R^3)}+ \||\nabla|^{\frac23} u\|_{L_{t,x}^4(\R\times\R^3)}<\infty.
\]
In particular, $u$ scatters. 
\end{proposition}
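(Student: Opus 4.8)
The plan is to run a perturbative (small-data-type) argument, with the crucial twist that the smallness is extracted not from the critical Sobolev norm $E$ (which is large) but from the critical Besov norm $\eta$. Writing the solution via Duhamel's formula
\[
u(t) = \cos(t|\nabla|)u_0 + \tfrac{\sin(t|\nabla|)}{|\nabla|}u_1 - \int_0^t \tfrac{\sin((t-s)|\nabla|)}{|\nabla|}\big(|u|^6u\big)(s)\,ds,
\]
I would first record the relevant Strichartz estimates for the three-dimensional wave equation. The pairs underlying the two scattering norms are admissible: $L^{12}_{t,x}$ is admissible at regularity $s_c=\tfrac76$, while $L^4_{t,x}$ is admissible at regularity $\tfrac12$, so that $\||\nabla|^{2/3}\cdot\|_{L^4_{t,x}}$ sits at regularity $\tfrac76$ as well. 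Thus the homogeneous evolution is bounded in both scattering norms by $\|(u_0,u_1)\|_{\dot H^{7/6}\times\dot H^{1/6}}=E$, and the usual argument reduces scattering to proving that the global scattering norm is finite: once $\|u\|_{L^{12}_{t,x}(\R\times\R^3)}<\infty$, the Duhamel integral converges in $\dot H^{7/6}\times\dot H^{1/6}$ as $t\to\pm\infty$ and produces the asymptotic linear profiles.

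The heart of the matter is a nonlinear estimate. The scaling-correct prototype, obtained from the fractional Leibniz rule and Hölder in space-time, is
\[
\big\||\nabla|^{2/3}(|u|^6u)\big\|_{L^{4/3}_{t,x}} \lesssim \|u\|_{L^{12}_{t,x}}^{6}\,\big\||\nabla|^{2/3}u\big\|_{L^4_{t,x}},
\]
and $L^{4/3}_{t,x}$ is the dual forcing space that feeds back into both scattering norms through the inhomogeneous Strichartz estimate. Used as written this only yields $S(u)\lesssim E + S(u)^7$ for the Sobolev scattering norm $S(u):=\|u\|_{L^{12}_{t,x}}+\||\nabla|^{2/3}u\|_{L^4_{t,x}}$, which closes only for small $E$. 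To exploit $\eta$ instead, I would introduce a frequency-localized (Besov-type) Strichartz norm $\|u\|_X$ — morally $\sup_{N}\big(\|P_Nu\|_{L^{12}_{t,x}}+\||\nabla|^{2/3}P_Nu\|_{L^4_{t,x}}\big)$ — for which frequency-localized Strichartz and the definition of the Besov norm give $\|\text{(homogeneous part)}\|_X\lesssim \|(u_0,u_1)\|_{\dot B^{7/6}_{2,\infty}\times\dot B^{1/6}_{2,\infty}}<\eta$. Replacing six of the seven factors above by their $X$-norms yields the refined estimates $\||\nabla|^{2/3}(|u|^6u)\|_{L^{4/3}_{t,x}}\lesssim \|u\|_X^{6}\,S(u)$ and its Besov-dual analogue $\|u\|_X\lesssim \eta + \|u\|_X^{6}\,S(u)$. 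A two-norm continuity argument then closes: the bound $S(u)\lesssim E + \|u\|_X^6 S(u)$ gives $S(u)\lesssim E$ once $\|u\|_X$ is small, while $\|u\|_X\lesssim \eta + E\|u\|_X^{6}$ propagates $\|u\|_X\lesssim \eta$ provided $\eta<\eta_0(E)$ with $\eta_0(E)$ a negative power of $E$ (heuristically $\eta_0(E)\sim E^{-1/5}$). This is precisely where the threshold's dependence on $E$ enters.

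The main obstacle is the multilinear estimate — both the prototype and, more delicately, its frequency-localized refinement. One must distribute the fractional derivative $|\nabla|^{2/3}$ across the degree-seven nonlinearity (a fractional Leibniz/Moser-type rule adapted to the product $|u|^6u$, which is not smooth in $u$), verify that every Hölder split lands on an admissible wave-Strichartz pair, and — for the refined bound — organize the Littlewood–Paley sum so that one genuinely extracts the factors of $\eta=\sup_N(\cdots)$ from the $\ell^\infty$ (Besov) norm while the summation over the remaining frequencies is \emph{geometric} rather than merely logarithmic. The high-high-to-low interactions are the critical point here, since at the scaling-critical regularity they are exactly borderline; I would handle them by exploiting the small amount of room available from placing the derivative on the highest-frequency factor and from the surplus of factors in the nonlinearity. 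Granting these estimates, the continuity argument yields a global solution with finite scattering norms, and scattering follows as indicated above.
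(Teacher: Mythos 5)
Your overall strategy---extract smallness from the critical Besov norm rather than from $E$, use the Strichartz numerology $\gamma(12,12)=\tfrac76$ and $\gamma(4,4)=\tfrac12$, estimate the nonlinearity in the dual space $L^{4/3}_{t,x}$ via the fractional chain rule, and close perturbatively---is the same as the paper's. But the way you inject the Besov smallness has a genuine gap. You propose a two-norm bootstrap in which $S(u)$ stays of size $E$ while a frequency-localized solution norm $\|u\|_X=\sup_N\bigl(\|P_Nu\|_{L^{12}_{t,x}}+\||\nabla|^{2/3}P_Nu\|_{L^4_{t,x}}\bigr)$ stays of size $\eta$, and the whole scheme rests on the claimed estimate $\||\nabla|^{2/3}(|u|^6u)\|_{L^{4/3}_{t,x}}\lesssim\|u\|_X^6\,S(u)$. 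That estimate does not follow from the tools you have in play: H\"older and the fractional chain rule produce $\|u\|_{L^{12}_{t,x}}^6\||\nabla|^{2/3}u\|_{L^4_{t,x}}$, and the full norm $\|u\|_{L^{12}_{t,x}}$ of the reassembled sum $u=\sum_N P_Nu$ is not controlled by the $\ell^\infty_N$ quantity $\sup_N\|P_Nu\|_{L^{12}_{t,x}}$. Recovering it requires expanding the Littlewood--Paley square function and gaining an off-diagonal factor $(N_{\mathrm{low}}/N_{\mathrm{high}})^\eps$; you correctly identify this as the crux, but the gain you invoke comes from estimating the extreme-frequency pieces at perturbed admissible pairs $(q_l,r_l)$, $(q_h,r_h)$ with $\gamma=\tfrac12\pm2\eps$ together with Bernstein---and those auxiliary frequency-localized space-time norms of the \emph{nonlinear solution} are controlled by neither $X$ nor $S$ in your bootstrap. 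As written, the argument does not close; fixing it would require adding those auxiliary norms to the iteration scheme.

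The paper sidesteps this entirely by proving the refined estimate only for the \emph{free} evolution (Lemma~\ref{L:refined}): $\|e^{it|\nabla|}f\|_{X(\R)}\lesssim\|f\|_{\dot B^{7/6}_{2,\infty}}^\theta\|f\|_{\dot H^{7/6}}^{1-\theta}$ for some $\theta\in(0,1)$, where the off-diagonal gain is free because the relevant pieces are linear waves obeying Strichartz at the perturbed pairs. The linear term in Duhamel is then $\lesssim\eta^\theta E^{1-\theta}$, which is small once $\eta<\eta_0(E)$, and a \emph{single}-norm bootstrap $\|v\|_{X(I)}\lesssim\eta^\theta E^{1-\theta}+\|v\|_{X(I)}^7$ closes crudely, with the nonlinearity estimated by the seventh power of the full scattering norm. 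Note the structural difference in the conclusions: the paper shows the entire scattering norm of the solution is small (of size $\eta^\theta E^{1-\theta}$), i.e.\ the hypothesis turns the problem into a disguised small-data problem, whereas your scheme aims for the weaker (and here unnecessary) conclusion $S(u)\lesssim E$. If you reorganize your argument so that the interpolation between the Besov and Sobolev norms is applied to the initial data through the linear propagator, rather than to the solution itself, the rest of your outline goes through.
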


To prove this result, it is useful to complexify the equation and introduce the variable
\[
v=u-i|\nabla|^{-1}\partial_t u,\qtq{where}|\nabla|^{-1}=(-\Delta)^{-\frac12}. 
\]
Then $(u,\partial_t u)\in \dot H^{s_c}\times \dot H^{s_c-1}$ if and only if $v\in \dot H^{s_c}$ (with comparable norms), with a similar statement concerning the Besov norms. The equation \eqref{nlw} is then equivalent to
\begin{equation}\label{nlw-v}
i\partial_t v =  - |\nabla| v - |\nabla|^{-1}(|\Re v|^6\Re v) =0,
\end{equation}
which in turn is equivalent to the Duhamel formulation
\begin{equation}\label{Duhamel}
v(t) = e^{it|\nabla|}v_0 + i\int_0^t e^{i(t-s)|\nabla|}|\nabla|^{-1}(|\Re v|^6 \Re v)\,ds. 
\end{equation}

The proof of Proposition~\ref{P:A1} will rely primarily on Strichartz estimates for the operator $e^{it|\nabla|}$.  To begin, we have the following estimates (see, for example,  \cite{V-Oberwolfach}): Let $(q,r)$ and $(\tilde q,\tilde r)$ be wave admissible in $3d$, i.e.
\[
\tfrac{1}{q}+\tfrac{1}{r}\leq \tfrac{1}{2},\quad (q,r)\neq(2,\infty). 
\]
Defining 
\[
\gamma(q,r) = \tfrac{3}{2}-\tfrac{1}{q}-\tfrac{3}{r},
\] 
we have: 
\begin{align}
\| e^{it|\nabla|}f\|_{L_t^q L_x^r} & \lesssim \| |\nabla|^{\gamma(q,r)} f\|_{L^2}, \label{Str1} \\
\biggl\| \int_0^t e^{i(t-s)|\nabla|}F(s)\,ds\bigg\|_{L_t^q L_x^r} & \lesssim \| |\nabla|^{\gamma(q,r)+\gamma(\tilde q,\tilde r)}F\|_{L_t^{\tilde q'} L_x^{\tilde r'}}.\label{Str2}
\end{align}

The specific space-time norms appearing Proposition~\ref{P:A1} are critical norms for the case $(d,p)=(3,6)$.  That is, they are invariant under the rescaling \eqref{scaling}.  

We define
\begin{equation}\label{X}
\|e^{it|\nabla|}f\|_{X(I)} := \|e^{it|\nabla|}f\|_{L_{t,x}^{12}(I\times\R^3)} + \| |\nabla|^{\frac23} e^{it|\nabla|}f\|_{L_{t,x}^{4}(I\times\R^3)}.
\end{equation}
By the Strichartz estimates above (noting that $\gamma(4,4)=\tfrac12$), we have
\[
\|e^{it|\nabla|}f\|_{X(\R)} \lesssim \| |\nabla|^{\frac76} f\|_{L^2(\R^3)}. 
\]

A less standard ingredient for the proof of Proposition~\ref{P:A1} will be the following refined Strichartz estimate for this norm.

\begin{lemma}[Refined Strichartz estimate]\label{L:refined} There exists $\theta\in(0,1)$ such that
\[
\|e^{it|\nabla|}f\|_{X(\R)} \lesssim \||f\|_{\dot B_{2,\infty}^{s_c}}^\theta \| |\nabla|^{\frac76} f\|_{L^2}^{1-\theta}.
\]
\end{lemma}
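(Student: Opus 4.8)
The plan is to reduce the inequality to a single refined diagonal Strichartz estimate for the half-wave group and to prove that estimate by combining the Littlewood--Paley square function with a bilinear Strichartz estimate, as in the refinements underlying profile decompositions.

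First I would treat the two terms of the $X$-norm \eqref{X} separately and note that they are the same statement at two Lebesgue exponents. For the $L^{12}_{t,x}$ term one seeks the bound as written; for $\||\nabla|^{2/3}e^{it|\nabla|}f\|_{L^4_{t,x}}$, setting $h=|\nabla|^{2/3}f$ and using $\gamma(4,4)=\tfrac12$ together with $\|h\|_{\dot H^{1/2}}=\|f\|_{\dot H^{7/6}}$ and $\|h\|_{\dot B^{1/2}_{2,\infty}}\approx\|f\|_{\dot B^{7/6}_{2,\infty}}$, one is reduced to the same type of bound for $e^{it|\nabla|}h$ in $L^4_{t,x}$. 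It therefore suffices to prove, for $q\in\{4,12\}$ and $\gamma=\gamma(q,q)$,
\[
\|e^{it|\nabla|}g\|_{L^q_{t,x}(\R\times\R^3)}\lesssim\|g\|_{\dot B^{\gamma}_{2,\infty}}^{\theta}\,\|g\|_{\dot H^{\gamma}}^{1-\theta},
\]
after which the two exponents are reconciled via $\|g\|_{\dot B^{\gamma}_{2,\infty}}\lesssim\|g\|_{\dot H^{\gamma}}$. Since $\dot B^{\gamma}_{2,\infty}$ is weaker than $\dot H^{\gamma}$, this genuinely refines the endpoint Strichartz bound stated before the lemma.

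I would carry this out in detail for $q=4$, which already disposes of the second term of $X$. Writing $g_N=e^{it|\nabla|}P_N g$ and $a_N=N^{\gamma}\|P_N g\|_{L^2}$ (so that $\|g_N\|_{L^4_{t,x}}\lesssim a_N$ by \eqref{Str1}), the square function estimate and the fact that the $g_N$ are their own Littlewood--Paley pieces give
\[
\|e^{it|\nabla|}g\|_{L^4_{t,x}}^4\approx\int\Big(\sum_N|g_N|^2\Big)^2=\sum_{M,N}\|g_Mg_N\|_{L^2_{t,x}}^2.
\]
The key input is a bilinear Strichartz estimate: for dyadic $M\le N$,
\[
\|g_Mg_N\|_{L^2_{t,x}(\R\times\R^3)}\lesssim\big(\tfrac MN\big)^{\beta}\|g_M\|_{L^4_{t,x}}\|g_N\|_{L^4_{t,x}}
\]
for some $\beta>0$, an improvement by a power of the frequency ratio (reflecting the transversality of well-separated cone pieces) over the bound given by H\"older and \eqref{Str1}. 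Combined with $\|g_K\|_{L^4_{t,x}}\lesssim a_K$ and setting $c_N=a_N^2$, this reduces matters to the elementary form $\sum_{M\le N}(M/N)^{2\beta}c_Mc_N$, for which the off-diagonal decay supplies the two bounds $\lesssim\|c\|_{\ell^1}^2$ and $\lesssim\|c\|_{\ell^\infty}\|c\|_{\ell^1}$. Their geometric mean yields $\lesssim\|c\|_{\ell^\infty}^{2\theta}\|c\|_{\ell^1}^{2(1-\theta)}=\|g\|_{\dot B^{\gamma}_{2,\infty}}^{4\theta}\|g\|_{\dot H^{\gamma}}^{4(1-\theta)}$ for $\theta\in(0,\tfrac12)$, and taking fourth roots gives the claim. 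The case $q=12$ is handled by the analogous argument with the square function raised to the sixth power, using the corresponding multilinear refinement in place of the bilinear estimate.

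The main obstacle is the bilinear estimate itself, in particular the interactions between frequency pieces whose Fourier supports are nearly parallel: unlike the paraboloid, the light cone is flat along its generators, so there is no transversality gain in the parallel regime. I would dispatch this through a Whitney-type angular decomposition of the two annuli into caps, keeping the clean frequency-ratio gain on transverse cap pairs (via a Klainerman--Machedon/Foschi--Klainerman bilinear estimate) and controlling the nearly-parallel pairs by the almost-orthogonality of the angular pieces together with a crude within-cap bound. Since only the existence of \emph{some} $\theta\in(0,1)$ is asserted, any fixed positive gain $\beta$ suffices, leaving considerable room; moreover, in the intended application $g$ is spherically symmetric, where the enhanced decay of radial waves offers a more elementary route to the same gain.
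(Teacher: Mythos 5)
Your proposal shares the paper's overall architecture---reduce to the two constituent norms of $X$, expand $\|e^{it|\nabla|}g\|_{L^4_{t,x}}^4$ via the Littlewood--Paley square function, extract an off-diagonal gain $(M/N)^{\beta}$ in the sum over frequency pairs, and close by an $\ell^1$--$\ell^\infty$ argument on the sequence $N^{\gamma}\|P_Ng\|_{L^2}$ that produces the Besov factor---but the mechanism producing the gain is genuinely different. You derive it from a bilinear Strichartz estimate for the cone, which forces you to confront transversality and the parallel-interaction degeneracy of the light cone (hence your Whitney/cap decomposition). The paper avoids bilinear estimates entirely: it writes $|u_{N_1}|^2|u_{N_2}|^2$ as a product of four factors and applies H\"older with two slightly perturbed sharp wave-admissible pairs, $(\tfrac{1}{q_l},\tfrac{1}{r_l})=(\tfrac14+\eps,\tfrac14-\eps)$ for the low frequency and $(\tfrac{1}{q_h},\tfrac{1}{r_h})=(\tfrac14-\eps,\tfrac14+\eps)$ for the high frequency; since $\gamma(q_l,r_l)=\tfrac12+2\eps$ and $\gamma(q_h,r_h)=\tfrac12-2\eps$, the \emph{linear} Strichartz estimate plus Bernstein already yields the factor $(N_1/N_2)^{2\eps}$, while the two remaining $L^4_{t,x}$ factors are taken out as $\sup_N\|u_N\|_{L^4_{t,x}}\lesssim\|f\|_{\dot B^{7/6}_{2,\infty}}$. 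This is more elementary, and the same device (lowest frequency in $L^{12}_tL^{12+}_x$, highest in $L^{12}_tL^{12-}_x$, the rest removed by a supremum) handles the $L^{12}_{t,x}$ piece without any multilinear refinement. Two caveats on your version: your intermediate bilinear inequality is stated with $\|g_M\|_{L^4_{t,x}}$ and $\|g_N\|_{L^4_{t,x}}$ on the right, which does not follow from the standard data-norm bilinear estimate (Strichartz is not reversible); what you actually need and use is $\|g_Mg_N\|_{L^2_{t,x}}\lesssim (M/N)^{\beta}M^{\gamma}N^{\gamma}\|P_Mg\|_{L^2}\|P_Ng\|_{L^2}$, which does hold---and for $M\ll N$ the parallel interactions you worry about are already harmless, since placing the low-frequency free wave in $L^\infty_x$ by dispersive decay and the high-frequency one in $L^2_x$ gives $\|g_Mg_N\|_{L^2_{t,x}}\lesssim M\|P_Mg\|_{L^2}\|P_Ng\|_{L^2}$, a gain of $(M/N)^{1/2}$ with no transversality input. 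So your route works, but it imports machinery the paper shows is unnecessary here.
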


\begin{proof} As the Besov norm is controlled by the Sobolev norm, it suffices to prove an estimate of this form for each norm appearing in \eqref{X}.  Let us focus on the $L_{t,x}^4$ norm, as the proof for the $L_{t,x}^{12}$ norm follows along similar lines.  

We employ the Littlewood--Paley frequency decomposition
\[
f=\sum_{N\in2^{\mathbb{Z}}} P_N f,
\]
as described in Section~\ref{S:Notation}. Let us denote
\[
u(t) = e^{it|\nabla|} |\nabla|^{\frac23} f,\quad u_N(t) = e^{it|\nabla|}|\nabla|^{\frac23} P_N f. 
\]
By the Littlewood--Paley square function estimate, we may write
\begin{align*}
\|u\|_{L_{t,x}^4}^4 & \sim \iint \biggl[\sum_{N\in 2^{\mathbb{Z}}} |u_N(t,x)|^2\biggr]^2\,dx\,dt \\
& \lesssim \iint \sum_{N_1\leq N_2} |u_{N_1}|^2|u_{N_2}|^2\,dx\,dt. 
\end{align*}

Now let $\eps>0$ be a small parameter and define two pairs of sharp wave-admissible exponents $(q_l,r_l)$ and $(q_h,r_h)$ by
\[
(\tfrac{1}{q_l},\tfrac{1}{r_l}) = (\tfrac14+\eps,\tfrac14-\eps),\quad (\tfrac{1}{q_h},\tfrac{1}{r_h})=(\tfrac14-\eps,\tfrac14+\eps). 
\]
Note that
\[
\gamma(q_l,r_l) = \tfrac12+2\eps,\quad \gamma(q_h,r_h)=\tfrac12-2\eps. 
\]
In particular, by \eqref{Str1} and Bernstein inequalities\footnote{Bernstein inequalities refer to the following general estimates for frequency-localized functions:
\begin{align*}
\|P_N f\|_{L^{r_2}(\R^d)}&\lesssim N^{\frac{d}{r_2}-\frac{d}{r_1}}\|P_N f\|_{L^{r_1}(\R^d)},\quad 1\leq r_1\leq r_2\leq\infty, \\
\|  |\nabla|^sP_N f\|_{L^r(\R^d)}&  \sim N^s\| P_N f\|_{L^r(\R^d)}, \quad 1<r<\infty,\quad s\in\R. 
\end{align*}}, we have
\begin{align*}
\| u_{N_1}\|_{L_t^{q_l} L_x^{r_l}} & \lesssim \| |\nabla|^{\frac76+2\eps} f_{N_1}\|_{L^2} \lesssim N_1^{2\eps}\|f_{N_1}\|_{\dot H^{\frac76}}, \\
\| u_{N_2}\|_{L_t^{q_h} L_x^{r_h}} & \lesssim \| |\nabla|^{\frac76-2\eps} f_{N_2}\|_{L^2} \lesssim N_2^{-2\eps}\|f_{N_2}\|_{\dot H^{\frac76}}. 
\end{align*}
Thus, continuing from above and applying \eqref{Str1} and Cauchy--Schwarz, we have
\begin{align*}
\|u\|_{L_{t,x}^4}^4 & \lesssim \sum_{N_1\leq N_2} \|u_{N_1}\|_{L_t^{q_l}L_x^{r_l}}\|u_{N_1}\|_{L_{t,x}^4} \|u_{N_2}\|_{L_{t,x}^4}\|u_{N_2}\|_{L_t^{q_h}L_x^{r_h}} \\
& \lesssim \bigl[\sup_{N} \|u_{N}\|_{L_{t,x}^4}\bigr]^2 \sum_{N_1\leq N_2}\bigl(\tfrac{N_1}{N_2}\bigr)^{2\eps}\|f_{N_1}\|_{\dot H^{\frac76}}\|f_{N_2}\|_{\dot H^{\frac76}} \\
& \lesssim \bigl[\sup_N \|f_N\|_{\dot H^{\frac76}}\bigr]^2 \|f\|_{\dot H^{\frac76}}^2,
\end{align*}
which yields the desired estimate.

In the case of the $L_{t,x}^{12}$ norm, the situation is actually simpler because we work away from the `strictly admissible' line $\tfrac{1}{q}+\tfrac{1}{r}=\tfrac{1}{2}$.  In this case we put one of the lowest frequency pieces $u_{N_1}$ in $L_t^{12}L_x^{12+}$ and one of the highest frequency pieces $u_{N_{6}}$ in $L_t^{12} L_x^{12-}$.  All of the remaining pieces are taken out with a supremum in $N$ of the $L_{t,x}^{12}$ norm, which yields the Besov norm after an application of Strichartz (as above). Using Bernstein and Strichartz for the low and high frequency pieces, we get a gain of $(\tfrac{N_1}{N_{6}})^{\eps}$, which can be used to defeat the logarithmic loss from the sums in $N_2,\dots,N_5$ and to sum in $N_1,N_6$ via Cauchy--Schwarz, just as above. \end{proof}

We turn to the proof of Proposition~\ref{P:A1}.

\begin{proof}[Proof of Proposition~\ref{P:A1}] The standard well-posedness theory for \eqref{nlw} yields a local-in-time solution to \eqref{nlw-v} satisfying the Duhamel formula \eqref{Duhamel}.  It will therefore suffice to prove the estimate 
\begin{equation}\label{bootstrap}
\|v\|_{X(I)} \lesssim \|v_0\|_{\dot B^{\frac76}_{2,\infty}}^\theta \|v_0\|_{\dot H^{\frac76}}^{1-\theta} + \|v\|_{X(I)}^7
\end{equation}
on any interval $I\subset\R$ containing $t=0$, where $\theta\in(0,1)$ is as in Lemma~\ref{L:refined}. Indeed, choosing $\eta_0=\eta_0(E)$ sufficiently small (where $E,\eta_0$ are as in the statement of Proposition~\ref{P:A1}), we may make the first term on the right-hand side of \eqref{bootstrap} as small as we wish. Then, by a standard continuity argument, \eqref{bootstrap} implies
\[
\|v\|_{X(\R)}\lesssim  \|v_0\|_{\dot B^{\frac76}_{2,\infty}}^\theta \|v_0\|_{\dot H^{\frac76}}^{1-\theta},
\]
giving the desired global space-time bounds for $v$.

It therefore remains to prove \eqref{bootstrap}.  Recalling the Duhamel formula \eqref{Duhamel}, we apply Lemma~\ref{L:refined}, \eqref{Str2}, and the fractional chain rule\footnote{This refers to the estimate \[ \||\nabla|^s F(u)\|_{L^r} \lesssim \|F'(u)\|_{L^{r_1}}\| |\nabla|^s u\|_{L^{r_2}}\] for $0<s<1$ and $1<r,r_1,r_2<\infty$ satisfying $\tfrac{1}{r}=\tfrac{1}{r_1}+\tfrac{1}{r_2}$.} to estimate
\begin{align*}
\|v\|_X & \lesssim \|v_0\|_{\dot B^{\frac76}_{2,\infty}}^\theta \|v_0\|_{\dot H^{\frac76}}^{1-\theta} + \| |\nabla|^{\frac23}\bigl(|u|^6 u\bigr)\|_{L_{t,x}^{\frac43}} \\
& \lesssim \|v_0\|_{\dot B^{\frac76}_{2,\infty}}^\theta \|v_0\|_{\dot H^{\frac76}}^{1-\theta} + \|v\|_{L_{t,x}^{12}}^6 \| |\nabla|^{\frac23} v \|_{L_{t,x}^4} \\
& \lesssim \|v_0\|_{\dot B^{\frac76}_{2,\infty}}^\theta \|v_0\|_{\dot H^{\frac76}}^{1-\theta}+ \|v\|_{X}^7,
\end{align*}
giving \eqref{bootstrap}, as desired. \end{proof}

\appendix

%%%
\section{Incoming waves for the radial wave equation}\label{Incoming}

In this section we describe the notion of incoming/outgoing waves for the radial wave equation.  In particular, we wish to explain the origin of the `incoming' condition
\begin{equation}\label{appendix-incoming}
u_1 = \partial_r u_0 + \tfrac{d-2}{r}u_0.
\end{equation}  
Our discussion will be brief---for more details, see \cite{Marius}.  

First, consider the radial wave equation in three space dimensions: 
\begin{equation}\label{linear-3d}
u_{tt}-u_{rr} - \tfrac{2}{r}u_r = 0. 
\end{equation}
If $u$ and $v$ are related through
\[
u(r) = \tfrac{1}{r}\int_0^r v(\rho)\,d\rho,
\]
then one finds that $u$ solving \eqref{linear-3d} is equivalent to $v$ solving the $1d$ wave equation 
\[
v_{tt}-v_{rr} =0;
\]
however, we should now view $v$ as a solution on the half-line $r\in(0,\infty)$ with the Neumann boundary condition $\partial_r v|_{r=0}=0$. 

If instead $u$ solves the radial equation in five space dimensions, i.e.
\begin{equation}\label{linear-5d}
u_{tt}-u_{rr}-\tfrac{4}{r}u_r =0,
\end{equation}
then (after a short computation) one finds that the same situation arises when $u$ and $v$ are related via
\[
u(r) = \tfrac{1}{r^3}\int_0^r (r^2-\rho^2)v(\rho)\,d\rho. 
\]

Now, for the $1d$ wave equation (with Neumann boundary conditions) one can use the explicit formula for solutions to see that every solution may be decomposed into an incoming and outgoing piece, i.e.
\[
v(t)=v_{in}(t) + v_{out}(t),
\]
where $v_{in}$ moves toward $r=0$ and $v_{out}$ moves toward $r=\infty$ (both with speed one). As $t\to\infty$, the solution becomes increasingly outgoing, while as $t\to-\infty$ the solution becomes increasingly incoming. Writing $(v_0,v_1)$ for the initial conditions of $v(t)$, one finds that at time zero the incoming component is
\[
v_- = \tfrac12(v_0+\partial_r^{-1} v_1),
\]
while the outgoing component is
\[
v_+ = \tfrac12(v_0-\partial_r^{-1} v_1),
\]
where $\partial_r^{-1}$ denotes the antiderivative.  For more detail, see \cite{Marius}.

We are interested in prescribing `incoming' initial data.  In this case, at the linear level there is an initial `focusing' of the solution toward the origin, which we can then observe is countered by the defocusing effect of the nonlinearity.  In particular, an initial data pair for $v$ is incoming if $v_+=0$, that is, if
\[
(v,\partial_t v)|_{t=0}=(v_0,\partial_r v_0).
\]
Let us now derive equivalent conditions at the level of $(u_0,u_1)$.  These are the conditions appearing in our choices of initial data. 

For the three-dimensional case, we have the simple relation $v=\partial_r(ru)$, and the incoming condition becomes
\begin{equation}\label{incoming-3d}
u_1 =\partial_r u_0 + \tfrac{1}{r}u_0.
\end{equation}
For the five-dimensional case, we instead get $\partial_r^3(r^3 u)=2v+\partial_r v$, and the incoming condition becomes
\begin{equation}\label{incoming-5d}
u_1 = \partial_r u_0 + \tfrac{3}{r}u_0.
\end{equation}
In general, one derives the condition \eqref{appendix-incoming}.

We close this section with the following lemma, which shows that for sufficiently regular $u_0$, the initial velocity velocity $u_1$ still belongs to $L^2 \cap \dot H^{s}$ for suitable $s$ (despite the presence of the singular term $1/r$). 
\begin{lemma}\label{Lemma} Let $u$ be a Schwartz function in $\R^d$, with $d\geq 3$.  Then $\tfrac{1}{|x|}u$ belongs to $\dot H^s$ for any $0\leq s\leq\min\{\tfrac{d}{2}-1,1\}$.  In fact, 
\begin{equation}\label{infact}
\|\tfrac{1}{|x|}u\|_{\dot H^{s}}\lesssim \|u\|_{\dot H^{s+1}}
\end{equation}
for any such $s$.
\end{lemma}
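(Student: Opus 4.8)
The plan is to deduce everything from the classical Hardy inequality
\[
\||x|^{-\sigma}f\|_{L^2(\R^d)}\lesssim\|f\|_{\dot H^\sigma(\R^d)},\qquad 0\le\sigma<\tfrac d2,
\]
whose admissible range is nonempty precisely because $d\ge3$. I would treat the endpoints $s=0$ and $s=1$ and the fractional interior $0<s<1$ separately. The case $s=0$ is literally Hardy's inequality with $\sigma=1$ (hence the restriction $d\ge3$); note this already yields the $L^2$-membership of $u_1$ that is used in the body of the paper. The top value $s=1$ arises only when $d\ge4$; for $d\ge5$ I would use the ordinary Leibniz rule $\nabla(|x|^{-1}u)=|x|^{-1}\nabla u-\tfrac{x}{|x|^3}u$ and bound the two terms by $\||x|^{-1}\nabla u\|_{L^2}\lesssim\|u\|_{\dot H^2}$ and $\||x|^{-2}u\|_{L^2}\lesssim\|u\|_{\dot H^2}$, the second being Hardy with $\sigma=2$ and thus needing $d\ge5$. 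Since the two cases of actual interest, $s=\tfrac16$ (for $d=3$) and $s=\tfrac12$ (for $d=5$), are fractional and strictly interior, the proof really lives in the range $0<s<1$.

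For $0<s<1$ I would use the Gagliardo characterization
\[
\|g\|_{\dot H^s}^2\sim\iint_{\R^d\times\R^d}\frac{|g(x)-g(y)|^2}{|x-y|^{d+2s}}\,dx\,dy
\]
with $g=|x|^{-1}u$, splitting
\[
\frac{u(x)}{|x|}-\frac{u(y)}{|y|}=\frac{u(x)-u(y)}{|x|}+u(y)\Bigl(\tfrac1{|x|}-\tfrac1{|y|}\Bigr).
\]
The coefficient-difference piece is routine: the bound $\bigl|\tfrac1{|x|}-\tfrac1{|y|}\bigr|\le\tfrac{|x-y|}{|x||y|}$ cancels two powers of $|x-y|$ in the kernel, and performing the (purely radial) inner integral by scaling reduces this piece to $\||x|^{-(1+s)}u\|_{L^2}^2$. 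This is controlled by Hardy with $\sigma=1+s$, which is exactly what forces the constraint $s<\tfrac d2-1$.

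The main obstacle is the function-difference piece $\iint|x|^{-2}|u(x)-u(y)|^2|x-y|^{-(d+2s)}$. I would split it according to the relative size of $|x|$ and $|y|$. On the separated regions ($|y|\le|x|/2$, where $|x-y|\sim|x|$, or $|y|\ge2|x|$, where $|x-y|\sim|y|$) the kernel is no longer singular, so I may drop the cancellation via $|u(x)-u(y)|^2\lesssim|u(x)|^2+|u(y)|^2$ and evaluate the resulting radial integrals explicitly; each term again collapses to $\||x|^{-(1+s)}u\|_{L^2}^2$, and it is the convergence of these auxiliary radial integrals that pins down the constraints $d\ge3$ and $s>0$. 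The delicate region is $|x|\sim|y|$, where the nonintegrable diagonal singularity forbids discarding the difference. There I would retain the cancellation by writing $u(x)-u(y)=\int_0^1\nabla u\bigl(y+t(x-y)\bigr)\cdot(x-y)\,dt$ and using that every intermediate point obeys $|y+t(x-y)|\sim|x|$ on this region. After the shear substitution $w=x-y$, $z=y+t(x-y)$, the factor $|x-y|^2$ from the mean-value form lowers the kernel order by two, and the bounded inner integral $\int_{|w|\lesssim|z|}|w|^{2-(d+2s)}\,dw\sim|z|^{2-2s}$ converges precisely because $s<1$; combining this gain with the weight $|z|^{-2}$ leaves $\||x|^{-s}\nabla u\|_{L^2}^2$, which is controlled by Hardy with $\sigma=s<\tfrac d2$. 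Summing the pieces gives \eqref{infact} on the stated range, and I expect the close region above to be the only genuinely nontrivial step.
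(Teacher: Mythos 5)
Your argument is correct on the range it actually covers and is genuinely different from the paper's. The paper never leaves the Fourier side: after splitting off $\|\,|x|^{-1}|\nabla|^s u\|_{L^2}$ by Hardy, it reduces \eqref{infact} to the commutator bound $\|[\tfrac1{|x|},|\nabla|^s]u\|_{L^2}\lesssim\|u\|_{\dot H^{s+1}}$, realizes the commutator as convolution against $|\xi-\eta|^{-(d-1)}(|\xi|^s-|\eta|^s)$ using $\F|x|^{-1}\sim|\xi|^{1-d}$, and closes with Lorentz-space refinements of Young and H\"older. Your physical-space route through the Gagliardo seminorm is more elementary (no Lorentz spaces, no Fourier transform of $|x|^{-\alpha}$), and the two decompositions are parallel: your coefficient-difference/function-difference split mirrors the paper's Hardy-term/commutator split, and your use of $|u(x)-u(y)|\le|x-y|\int_0^1|\nabla u|$ on the diagonal plays the role of the paper's $\bigl||\xi|^s-|\eta|^s\bigr|\lesssim|\xi-\eta|^s$. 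What the paper's Lorentz-space bookkeeping buys is the closed endpoint: you reduce the coefficient piece to $\||x|^{-(1+s)}u\|_{L^2}$ and then to Hardy with exponent $1+s$, which fails at $1+s=\tfrac d2$; so, as you yourself note, you only obtain $s<\tfrac d2-1$, and (together with your $s=1$ case needing $d\ge5$) you do not recover the stated closed range for $d=3$ (missing $s=\tfrac12$) or $d=4$ (missing $s=1$). Since the paper only invokes the lemma at $s=\tfrac16$, $d=3$ and $s=\tfrac12$, $d=5$, both strictly interior, this costs nothing for the application, but your lemma is strictly weaker than the one stated.

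One step needs repair. On the region $|x|\sim|y|$ it is not true that every intermediate point $y+t(x-y)$ satisfies $|y+t(x-y)|\sim|x|$: antipodal configurations such as $x=-y$ have $|x|=|y|$ while the segment passes through the origin. The correct singular region is $|x-y|\le\tfrac12|x|$ (which forces $|y|\sim|x|$ and keeps the entire segment at distance $\sim|x|$ from the origin); the leftover part of $\{|x|\sim|y|\}$ has $|x-y|\gtrsim|x|$, so the kernel is not singular there and that piece should be absorbed into your ``separated'' treatment, where dropping the cancellation again collapses to $\||x|^{-(1+s)}u\|_{L^2}^2$. With that re-partition the rest of your close-region computation (the shear substitution, $\int_{|w|\lesssim|z|}|w|^{2-d-2s}\,dw\sim|z|^{2-2s}$ for $s<1$, and Hardy with exponent $s$) goes through.
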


\begin{proof} It is enough to prove \eqref{infact}. This estimate is very similar to Hardy's inequality, which states
\[
\| |x|^{-s}u\|_{L^p(\R^d)}\lesssim \| |\nabla|^s u\|_{L^p(\R^d)}\qtq{for} 1<p<\tfrac{d}{s}
\]
(see e.g. \cite{TaoDispersive}).  In particular, an application of Hardy's inequality (and the boundedness of Riesz potentials) reduces \eqref{infact} to the commutator estimate
\[
\| [\tfrac1{|x|},|\nabla|^s] u\|_{L^2} \lesssim \| |\nabla|^{s+1}u\|_{L^2}. 
\] 
For this we will use the Fourier transform.  The key fact that we need is
\[
\F |x|^{-\alpha} \sim |\xi|^{\alpha-d} \qtq{for} 0<\alpha<d,
\]
which can be deduced using scaling and symmetry properties of the Fourier transform, or by a direct computation using the Gamma function (see \cite[Lemma~1, p.117]{SteinSingular}).  In particular, we are faced with estimating
\[
\|[\tfrac{1}{|x|},|\nabla|^s]u\|_{L^2} \sim \biggl\| \int |\xi-\eta|^{-(d-1)}(|\xi|^s - |\eta|^s)\hat u(\eta)\,d\eta\biggr\|_{L_\xi^2}.
\]
As $0\leq s\leq 1$, we can estimate this by
\[
\biggl\| \int |\xi-\eta|^{-(d-1-s)} |\hat u(\eta)|\,d\eta\biggr\|_{L_\xi^2} = \|\, |\xi|^{-(d-1-s)}\ast |\hat u|\,\|_{L_\xi^2}. 
\]
Using the Lorentz-space\footnote{Here $L^{q,\alpha}$ denotes the Lorentz-space defined via the quasi-norm
\[
\|f\|_{L^{q,\alpha}} = \bigl\| \lambda |\{|f|>\lambda\} |^{\frac{1}{q}} \bigr\|_{L^\alpha((0,\infty),\frac{d\lambda}{\lambda})}. 
\]} 
refinements of Young's inequality and H\"older's inequality  (cf. \cite{Hunt, Oneil}), we may estimate
\begin{align*}
\|\, |\xi|^{-(d-1-s)}\ast |\hat u|\,\|_{L_\xi^2} & \lesssim \| |\xi|^{-(d-1-s)}\|_{L^{\frac{d}{d-1-s},\infty}} \| \hat u\|_{L^{\frac{2d}{d+2+2s},2}} \\
& \lesssim \| |\xi|^{-(s+1)}\|_{L^{\frac{d}{s+1},\infty}} \| |\xi|^{s+1}\hat u\|_{L^2}  \lesssim \| |\nabla|^{s+1} u\|_{L^2},
\end{align*}
which is acceptable.  In the estimates above, we have used $s\leq \tfrac{d}{2}-1$ to guarantee that the exponents above fall into acceptable intervals. \end{proof}

\subsection*{Acknowledgements} Y. Zhang was supported by the US National Science Foundation under grant number DMS-1620465.  We are grateful to M. Beceanu for explanations related to the notion of incoming/outgoing waves for radial wave equations.  We would also like to thank J. Colliander for introducing us to the related work of Strauss and Vazquez \cite{StraussVazquez}, and to an anonymous referee for pointing out the work of \cite{DonSch}. 

%%%%%%%

\end{document}